\theoremstyle{plain}
\newtheorem{thm}{Theorem}[section]
\newtheorem{lem}[thm]{Lemma}
\newtheorem{prop}[thm]{Proposition}
\newtheorem{cor}[thm]{Corollary}
\newtheorem*{conj}{Conjecture}
\theoremstyle{definition}
\newtheorem{Def}[thm]{Definition}
\newtheorem{rem}[thm]{Remark}
\newcommand{\R}{\mathbb{R}}
\newcommand{\Z}{\mathbb{Z}}
\newcommand{\calB}{\mathcal{B}}
\newcommand{\calC}{\mathcal{C}}
\newcommand{\calO}{\mathcal{O}}
\newcommand{\abs}[1]{\lvert {#1} \rvert}
\newcommand{\EC}[2]{\mathrm{EC}(#2 ,B^{#1})}
\newcommand{\bbemb}[2]{\mathrm{Emb}_*(S^{#2},S^{#1})}
\newcommand{\bbembd}[2]{\mathrm{Emb}'_*(S^{#2},S^{#1})}
\newcommand{\bbfemb}[2]{\widetilde{\mathrm{Emb}}_*(S^{#2},S^{#1})}
\newcommand{\bbfembd}[2]{\widetilde{\rm Emb}{}'_*(S^{#2},S^{#1})}
\newcommand{\cl}{\mathrm{cl}}
\newcommand{\Emb}{\mathrm{Emb}}
\newcommand{\GL}{\mathit{GL}}
\newcommand{\GS}{\mathit{GS}}
\newcommand{\id}{\mathrm{id}}
\newcommand{\Int}{\mathrm{Int}\,}
\newcommand{\K}{\mathcal{K}}
\newcommand{\SO}{\mathit{SO}}
\numberwithin{equation}{section}
\numberwithin{figure}{section}
\numberwithin{table}{section}
\title{BV-structures on the homology of the framed long knot space}
\author{Keiichi Sakai}
\address{Department of Mathematical Sciences, Shinshu University, 3-1-1 Asahi, Matsumoto, Nagano 390-8621, Japan}
\email{ksakai@math.shinshu-u.ac.jp}
\urladdr{http://math.shinshu-u.ac.jp/~ksakai/index.html}
\subjclass[2010]{Primary~55P50, Secondary~16E40, 57Q45, 57R40}
\begin{document}
\maketitle

\begin{abstract}
We introduce BV-algebra structures on the homology of the space of framed long knots in $\R^n$ in two ways.
The first one is given in a similar fashion to Chas-Sullivan's string topology. 
The second one is defined on the Hochschild homology associated with a cyclic, multiplicative operad of graded modules.
The latter can be applied to Bousfield-Salvatore spectral sequence converging to the homology of the space of framed long knots.
Conjecturally these two structures coincide with each other.
\end{abstract}

\section{Introduction}\label{s:intro}
The space of framed long embeddings is known to be acted on by the little disks operad \cite{Budney03}.
A natural question is whether this action extends to any action of the {\em framed} little disks operad.
The answer seems affirmative, in view of \cite{Budney08, Salvatore06, Salvatore09}, \cite{DwyerHess10,SalvatoreWahl03}.

In the first result of this paper (Theorem~\ref{thm:BV}) we imitate Chas-Sullivan's string topology \cite{ChasSullivan99} to realize Salvatore's homotopy-theoretical action in a geometric and homological way.
Namely we define a {\em BV-algebra} structure on the homology of the space of framed long knots.
Our BV-structure is outlined as follows.
The bracket (called {\em Poisson bracket}) is induced by an action of little $2$-disks operad \cite{Budney03}. 
The {\em BV-operation} (usually denoted by $\Delta$) is derived from Hatcher's cycle \cite[p.~3]{Hatcher02}, which in a sense ``pushes the base point through long knots''.
As a corollary we obtain a Lie algebra structure on the $S^1$-equivariant homology.
We also show that our BV-operation $\Delta$ is not trivial (Proposition~\ref{prop:Delta_nonzero} below).

Our second result (Theorem~\ref{thm:BV_Hochschild}) is an algebraic one.
Based on \cite{Salvatore09,Bousfield87}, a homology spectral sequence, converging to the homology of the space of framed long knots (at least in higher-codimension cases), is constructed.
Its $E^2$-term is the {\em Hochschild homology} $HH_*(H_*(f\calC ))$ associated with the homology operad $H_*(f\calC )$ of the framed little disks operad, which is cyclic \cite{Budney08} and multiplicative.
Motivated by these facts, in \S\ref{s:BV_Hochschild} we provide a BV-algebra structure on $HH_*(\calO )$ of any cyclic and multiplicative operad $\calO$ of {\em graded} modules.
A bracket has already been defined in \cite{Tourtchine05}, and our BV-operation is given by a graded version of Connes' boundary operator (see \cite{Loday92}).
Our proof is a direct analogue to that for non-graded cases \cite{Menichi04}. 
Presumably Salvatore's framed little disks action would deduce the same formula as ours.

The paper is organized as follows.
In \S\ref{s:notations} three spaces of framed embeddings are defined and proved to be homotopy equivalent to each other.
We describe our geometric BV-algebra structure explicitly in \S\ref{s:BV}; the Poisson bracket in \S\ref{ss:Poisson} and \S\ref{ss:Poisson_emb}, and the $\Delta$-operation in \S\ref{ss:Hatcher}.
The definition of $HH_*(\calO)$ is reviewed in \S\ref{ss:Hochschild}, and the BV-algebra structure on $HH_*(\calO)$ is defined in \S\ref{ss:Connes}.

\section{The spaces of framed long knots}\label{s:notations}
We denote
by $B^n:=\{ x\in\R^n\mid\abs{x}\le 1\}$ the $n$-ball, and $S^n:=\partial B^{n+1}\subset\R^{n+1}$.
We often write $\infty :=(0,\dotsc,0,1)\in S^n$.
$S^1$ is always identified with $\R/\Z$ and let $0=1\in S^1$ serve as the basepoint.

We define three spaces of framed long knots, $\bbfemb{n}{1}$, $\bbfembd{n}{1}$ and $\EC{n-1}{1}$.
Eventually they turn out to be homotopy equivalent to each other.
A convenient one will be used to construct each homology operation.

First we define $\EC{n-1}{1}$, originally introduced in \cite{Budney03}.
The homotopy type of this space can be nicely described through an action of the little disks operad, and hence its homology is equipped with a Poisson algebra structure (see \S\ref{ss:Poisson}).
\begin{Def}[\cite{Budney03}]\label{def:EC}
For a manifold $M$, define the space ${\rm EC}(k,M)$ by
\[
 {\rm EC}(k,M):=\{ f:\R^k\times M\hookrightarrow\R^k\times M\, |\,
 f(t;x)=(t;x)\text{ if }t\not\in[0,1]^k\}.
\]
\end{Def}

We consider the case of {\em framed long knots}, namely $M=B^{n-1}$ and $k=1$.
The space $\EC{n-1}{1}$ is related to the space of {\em long knots}
\[
 \K_n:=\{ f:\R^1\hookrightarrow \R^1\times B^{n-1}\mid f(t)=(t,0,\dotsc,0)\text{ if }t\not\in[0,1]\}
\]
via the restriction map
\[
 {\rm res}:\EC{n-1}{1}\to\K_n,\quad f\mapsto f|_{\R^1\times\{ 0\}}
\]
which is a fibration with fiber $\Omega\SO(n-1)$.

\begin{Def}\label{def:Embd}
Define $\iota:S^1\hookrightarrow S^n$ and $\tilde{\iota}:S^1\times B^{n-1}\hookrightarrow S^n$ by
\begin{align*}
 \iota (t)&:=(\sin 2\pi t,0,\dots ,0,\cos 2\pi t)\in S^n\subset\R^{n+1},\quad\text{and}\\
 \tilde{\iota}(t;x)&:=\frac{\iota(t)+\epsilon_0(0,x,0)}{\abs{\iota(t)+\epsilon_0(0,x,0)}},
\end{align*}
where $\epsilon_0>0$ is some fixed small number.
Define $\bbfembd{n}{1}$ to be the space of embeddings $\tilde{\varphi}:S^1\times B^{n-1}\hookrightarrow S^n$ such that $\tilde{\varphi}(0;x)=\tilde{\iota}(0;x)$ and all the partial derivatives of $\tilde{\varphi}$ at $(0;x)\in S^1\times B^{n-1}$ of all orders are equal to those of $\tilde{\iota}$.
Similarly define $\bbembd{n}{1}$ to be the space of all embeddings $\varphi:S^1\hookrightarrow S^n$ such that $\varphi(0)=\iota(0)=\infty$ and all of its derivatives at $t=0$ are equal to those of $\iota$.
\end{Def}

The restriction map
\[
 \mathrm{res}:\bbfembd{n}{1}\to\bbembd{n}{1},\quad
 \tilde{\varphi}\mapsto\tilde{\varphi}|_{S^1\times\{0\}},
\]
is also a fibration with fiber $\Omega\SO(n-1)$.

There is another embedding space on which $S^1$ acts (Lemma~\ref{lem:S1-action}):

\begin{Def}\label{def:Emb}
Define $\bbemb{n}{1}$ to be the space of embeddings $\varphi:S^1\hookrightarrow S^n$ satisfying
\[
 \varphi (0)=\infty ,\quad
 \varphi'(0)/\abs{\varphi'(0)}=(1,0,\dotsc,0).
\]
Define $\bbfemb{n}{1}$ to be the space of pairs $(\varphi ;w)$, where $\varphi\in\bbemb{n}{1}$ and $w=(w_0,\dotsc,w_n):S^1\to\SO(n+1)$ ($w_i:S^1\to S^n$) satisfying
\begin{itemize}
\item $w_0(t)=\varphi'(t)/\abs{\varphi'(t)}$ and $w_n(t)=\varphi(t)$ for any $t\in S^1$,
\item $w(0)=I_{n+1}$ (the identity matrix).
\end{itemize}
Denote by $\pi:\bbfemb{n}{1}\to\bbemb{n}{1}$ the natural projection.
\end{Def}

Define $\tilde{\cl}:\EC{n-1}{1}\to\bbfembd{n}{1}$ (``closure'' of long embeddings) by $\tilde{\cl}(f):=\tilde{\iota}\circ f$.
This is smooth at $t=0$ and has the same partial derivatives at $(0;x)\in S^1\times B^{n-1}$ as $\tilde{\iota}$ because $f\in\EC{n-1}{1}$ extends to $\id_{\R^1\times B^{n-1}}$ outside $[0,1]$.
Similarly define the map $\cl:\K_n\to\bbembd{n}{1}$ by $\cl(f):=\tilde{\iota}\circ f$.

Define $\tilde{h}:\bbfembd{n}{1}\to\bbfemb{n}{1}$ by
\[
 \tilde{h}(\tilde{\varphi})(t):=\left(\tilde{\varphi}(t;0);\GS\left(\frac{\partial\tilde{\varphi}}{\partial t}(t;0),\frac{\partial\tilde{\varphi}}{\partial x_1}(t;0),\dotsc,\frac{\partial\tilde{\varphi}}{\partial x_{n-1}}(t;0),\tilde{\varphi}(t;0)\right)\right),
\]
where $\GS:\GL^+_{n+1}(\R )\xrightarrow{\simeq}\SO(n+1)$ is the Gram-Schmidt orthonormalization,
and define $h:\bbembd{n}{1}\to\bbemb{n}{1}$ as the natural inclusion.
Note that we have maps of fibration sequences;
\begin{equation}\label{eq:fibration_diagram2}
 \begin{split}
 \xymatrix{
 \Omega\SO(n-1)\ar@{=}[d]\ar[r] & \EC{n-1}{1}\ar[d]_-{\tilde{\cl}}\ar[r]^-{\rm res} & \K_n\ar[d]_-{\cl}\\
 \Omega\SO(n-1)\ar@{=}[d]\ar[r] & \bbfembd{n}{1}\ar[r]^-{\rm res}\ar[d]_-{\widetilde{h}} & \Emb'_*(S^1,S^n)\ar[d]_-h \\
 \Omega\SO(n-1)\ar[r]           & \bbfemb{n}{1}\ar[r]^-{\pi} & \bbemb{n}{1}
 }
 \end{split}
\end{equation}

\begin{prop}\label{prop:he}
The maps $\tilde{\cl}$ and $\cl$ are homotopy equivalences.
\end{prop}
\begin{proof}
Because the embedding spaces have homotopy types of CW-complexes, it suffices to show that the maps $\tilde{\cl}$ and $\cl$ are weak homotopy equivalences.
By \eqref{eq:fibration_diagram2}, it is enough to show that $\cl$ is a (weak) homotopy equivalence.

The homotopy inverse $\cl^{-1}$ is given by
\[
 \cl(\varphi)(t):=
 \begin{cases}
  \Phi(\varphi(t)) & \text{if }t\in[0,1],\\
  (t,0,\dotsc,0)   & \text{if }t\not\in[0,1],
 \end{cases}
\]
where $\Phi:S^n\setminus\{\infty\}\xrightarrow{\cong}(0,1)\times\Int B^{n-1}$ is the stereographic projection $S^n\setminus\{\infty\}\xrightarrow{\cong}\R^n$ followed by a dilation $\R^n\xrightarrow{\cong}(0,1)\times\Int B^{n-1}$ which is chosen so that $\Phi(\iota(t))=(t,0,\dotsc,0)$ for $t\in[0,1]$.
\end{proof}

\begin{prop}\label{prop:all_he}
The maps $\tilde{h}$ and $h$ are homotopy equivalences.
\end{prop}
\begin{proof}
Similarly to the proof of Proposition~\ref{prop:he}, it is enough to show that $h$ is a weak homotopy equivalence.
The idea is to ``straighten'' the elements of $\bbemb{n}{1}$ around $t=0$.

First we show that $h_*:\pi_k(\bbembd{n}{1},\iota)\to\pi_k(\bbemb{n}{1},\iota)$ is surjective (the basepoint $\iota$ will be omitted below).
Let $\xi\in\pi_k(\bbemb{n}{1})$ be represented by $g:S^k\to\bbemb{n}{1}$.
For each $z\in S^k$, there exists $\epsilon>0$ such that the $\epsilon$-ball $B_{\epsilon}\subset S^n$ (with respect to the standard metric on $S^n$) centered at $\infty\in S^n$ satisfies that
\begin{itemize}
\item $g(z)(S^1)\cap B_{\epsilon}$ is connected, and
\item if $g(z)^{-1}(B_{\epsilon})=(-\delta_1,\delta_2)$, then $g(z)_1$ (the first coordinate of $g(z)$) monotonely increases on the interval $(-\delta_1,\delta_2)$.
\end{itemize}
Such an $\epsilon>0$ as above can be taken uniformly for all $z\in S^k$.
Indeed there is $\epsilon'>0$ such that $g(z)'_1(t)>0$ for all $(z,t)\in S^k\times(-\epsilon',\epsilon')$, because the map $S^k\times S^1\to\R$ given by $(z,t)\mapsto g(z)'_1(t)$ is continuous and $\{g(z)'_1(0)\mid z\in S^k\}$ has the positive minimum by the compactness of $S^k$.
Then we can take $\epsilon>0$ so that $B_{\epsilon}$ does not intersect the compact set $\hat{g}(S^k\times(S^1\setminus(-\epsilon',\epsilon'))$, where $\hat{g}(z,t):=g(z)(t)$.
This $\epsilon$ satisfies the above conditions.
Note that $\delta_1,\delta_2>0$ depend continuously on $z$.

Consider a natural projection $p:B_{\epsilon}\to\iota (S^1)\cap B_{\epsilon}$ and diffeomorphisms $s_z:(-\delta_1,\delta_2)\to(-\delta'_1,\delta'_2)$ for some $\delta'_1,\delta'_2>0$ such that $p(g(z)(t))=\iota(s_z(t))$ (see Figure~\ref{fig:nbd}).
\begin{figure}
\begin{center}
\includegraphics{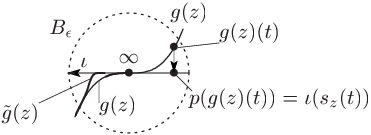}
\end{center}
\caption{A modification of $g$ to be standard near $\infty$}
\label{fig:nbd}
\end{figure}
Putting $\delta :=\min_{z\in S^k}\{\delta_1,\delta_2\}$, define
\[
 \bar{g}(z)(t):=
 \frac{(1-b_{\delta}(t))g(z)(t)+b_{\delta}(t)\iota(s_z(t))}{\abs{(1-b_{\delta}(t))g(z)(t)+b_{\delta}(t)\iota(s_z(t))}}
\]
where $b_{\delta}(t):=b(t/\delta)$ and $b:\R\to\R_{\ge 0}$ is a fixed bump function with support $\abs{t}\le 1$ satisfying $b(t)\equiv 1$ for $\abs{t}\le 1/2$ (and hence $b_{\delta}(t)\equiv 1$ for $\abs{t}\le\delta/2$).
By construction $\bar{g}$ is homotopic to $g$ and $\bar{g}(z)(t)=\iota(s_z(t))$ on $(-\delta,\delta)$.
Define
\[
 \tilde{g}(z)(t):=\bar{g}(z)(b_{\delta/2}(t)t+(1-b_{\delta/2}(t))s_z(t))
\]
then $g\sim\tilde{g}$ and $\tilde{g}(z)=\iota$ on $(-\delta/4,\delta/4)$, and hence $\tilde{g}$ represents an element of $\pi_k(\bbembd{n}{1})$ which is mapped to $\xi=[g]$.
Thus surjectivity follows.

Injectivity is proved in a similar way.
Suppose $\eta=[g]\in\pi_k(\bbembd{n}{1})$ maps to $0\in\pi_k(\bbemb{n}{1})$, and choose a map $G:B^{k+1}\to\bbemb{n}{1}$ bounded by $g$.
Since $B^{k+1}$ is compact, we can deform $G$ to be standard near $t=0$ in a similar way to the above.
Thus $g$ bounds a ball in $\bbembd{n}{1}$ and hence $\eta=0$.
\end{proof}

\begin{rem}
In fact the homotopy inverse $\tilde{\cl}{}^{-1}$ is given by composing an appropriate diffeomorphism $\Psi:S^n\setminus\tilde{\iota}(\{0\}\times B^{n-1})\xrightarrow{\cong}(0,1)\times\Int B^{n-1}$ to the elements of $\bbfembd{n}{1}$.
The homotopy inverse $\tilde{h}{}^{-1}$ is given by ``straightening'' the elements of $\bbfemb{n}{1}$ and ``fattening'' the knots by geodesics along the framings.
\end{rem}

\section{A geometric BV-structure}\label{s:BV}

\begin{Def}\label{def:BV_algebra}
A {\em $k$-Poisson (-Gerstenhaber) algebra} $A$ is a graded commutative algebra equipped with a graded Lie bracket $[-,-]:A\times A\to A$ of degree $k$, called \emph{Poisson (-Gerstenhaber) bracket}, satisfying the Leibniz rule
\[
 [x,yz]=[x,y]z+(-1)^{(\tilde{x}+k)\tilde{y}}y[x,z],
\]
where $\tilde{x}$ denotes the degree of $x$, that is, $x\in A_{\tilde{x}}$.
A $1$-Poisson algebra $A$ is called a \emph{BV-algebra} if it is endowed with a degree one operation $\Delta :A\to A$ satisfying
\begin{itemize}
\item $\Delta\circ\Delta =0$,
\item $\Delta (xy)=\Delta (x)y+(-1)^{\tilde{x}}x\Delta (y)+(-1)^{\tilde{x}}[x,y]$.
\end{itemize}
\end{Def}

The aim of this section is to define a BV-algebra structure on $H_*(\bbfemb{n}{1})$.
A Poisson algebra structure on $H_*(\EC{n-1}{1})$ has already been defined in \cite{Budney08}.
First we describe this structure on $H_*(\bbfembd{n}{1})$ (\S\ref{ss:Poisson_emb}).
Then we define the $\Delta$-operation using an $S^1$-action on $\bbfemb{n}{1}$.

\subsection{Budney's Poisson structure}\label{ss:Poisson}
Budney \cite{Budney03} constructed an action of the little $(k+1)$-disks operad $\calC_{k+1}$ on ${\rm EC}(k,M)$.
The main idea of the action can be found in \cite[Fig.~2]{Budney03};
start with the {\em connected-sum} $f\sharp g$ (defined explicitly below; see Figure~\ref{fig:connected_sum}), ``push off'' $g$ through $f$ as in the right-half of \cite[Fig.~2]{Budney03} until we arrive at $g\sharp f$, and perform the same procedure with $f$ and $g$ exchanged.
As a corollary we have the following.

\begin{thm}[\cite{Budney03}]\label{thm:Budney}
$H_*({\rm EC}(k,M))$ admits a $k$-Poisson algebra structure.
\end{thm}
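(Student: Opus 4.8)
The plan is to obtain the $k$-Poisson structure purely formally from the operad action already recalled above, by passing to homology. As noted in the text, the connected-sum $\sharp$ is one piece of an action of the little $(k+1)$-disks operad, which I will denote $\mathcal{D}_{k+1}$, on ${\rm EC}(k,M)$ \cite{Budney03}. Applying the homology functor to this action (over $\Z$, or over a field if one prefers) turns $H_*({\rm EC}(k,M))$ into an algebra over the homology operad $H_*(\mathcal{D}_{k+1})$. The whole theorem then reduces to producing an operad map from the $k$-Poisson operad into $H_*(\mathcal{D}_{k+1})$: any $H_*(\mathcal{D}_{k+1})$-algebra is in particular a $k$-Poisson algebra, and ${\rm EC}(k,M)$ is such a space.

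The key input for constructing that operad map is F.~Cohen's description of the homology of the little disks operads. I would extract two classes in arity $2$. The first is the degree-$0$ class $\mu\in H_0(\mathcal{D}_{k+1}(2))$ represented by a configuration of two disjoint sub-disks; since $\mathcal{D}_{k+1}(2)$ is connected, $\mu$ is commutative and associative, and it induces the graded-commutative product on $H_*({\rm EC}(k,M))$, which is exactly the $\sharp$-product. The second is the Browder class $\lambda\in H_k(\mathcal{D}_{k+1}(2))$, represented by the fundamental class of the $S^k$-family of configurations in which one disk orbits the other; it has degree $k$ and induces the Poisson bracket $[-,-]$.

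The axioms of a $k$-Poisson algebra then follow from the homology relations among $\mu$ and $\lambda$. Graded antisymmetry and the graded Jacobi identity for $\lambda$ (hence for $[-,-]$) are Cohen's relations for the Browder operation, so $[-,-]$ is a graded Lie bracket of degree $k$. The compatibility
\[
 [x,yz]=[x,y]z+(-1)^{(\tilde{x}+k)\tilde{y}}y[x,z]
\]
is the image of the operadic Leibniz relation expressing $\lambda\circ_1\mu$ as a sum of two compositions built from $\mu$ and $\lambda$; once an orientation of the $S^k$-orbit is fixed, the signs are forced by the Koszul rule and match the formula of Definition~\ref{def:BV_algebra}.

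The main obstacle is precisely the content imported from \cite{Budney03} and from Cohen's work, namely establishing these operadic relations in $H_*(\mathcal{D}_{k+1})$ and pinning down the signs in the degree-$k$ Leibniz rule; everything else is formal functoriality of homology. I would therefore present the proof as a direct appeal to Cohen's identification of the $k$-Poisson operad with a sub-operad of $H_*(\mathcal{D}_{k+1})$, specialized to the $\mathcal{D}_{k+1}$-space ${\rm EC}(k,M)$, and refer to \cite{Budney03} for the geometric realization of $\mu$ and $\lambda$ as the connected-sum and bracket operations.
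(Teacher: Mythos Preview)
Your proposal is correct and matches the paper's treatment: the paper does not supply a proof at all, but simply records the theorem as a corollary of Budney's little $(k+1)$-disks action on ${\rm EC}(k,M)$, which is exactly the mechanism you invoke (operad action $+$ F.~Cohen's identification of $H_*(\mathcal{D}_{k+1})$-algebras with $k$-Poisson algebras). There is nothing to add beyond the citation, and your sketch is an accurate unpacking of what that citation contains.
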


Here we describe Budney's Poisson algebra structure on $H_*(\EC{n-1}{1})$ explicitly.
The product (denoted by $x\cdot y$, or simply $xy$) and the Poisson bracket (denoted by $\lambda$) are induced by the ``second stage'' of the action of $\calC_2$;
\[
 \mu_2:\calC_2(2)\times\EC{n-1}{1}^{\times 2}\to\EC{n-1}{1}.
\]
The product corresponds to the generator of $H_0(\calC_2(2))\cong\Z$ and is induced by the {\em connected-sum} defined as follows.
For $0\le\alpha\le 1$, define two diffeomorphisms
\begin{alignat*}{2}
 &l_{\alpha}:\R^1\xrightarrow{\cong}\R^1&\quad\text{by}\quad &l_{\alpha}(t):=2t-\alpha,\\
 &s_{\alpha}:\R^1\times B^{n-1}\xrightarrow{\cong}\R^1\times B^{n-1}&\quad\text{by}\quad &s_{\alpha}:=l_{\alpha}\times\id_{B^{n-1}}.
\end{alignat*}
Then for $f\in\EC{n-1}{1}$,
\[
 L_{\alpha}f:=s_{\alpha}^{-1}\circ f\circ s_{\alpha}\in\EC{n-1}{1}
\]
has the support $[\frac{\alpha}{2},\frac{\alpha+1}{2}]\times B^{n-1}$ and satisfies
\[
 L_{\alpha}f\bigl(\bigl[\frac{\alpha}{2},\frac{\alpha+1}{2}\bigr]\times B^{n-1}\bigr)\subset\bigl[\frac{\alpha}{2},\frac{\alpha+1}{2}\bigr]\times B^{n-1}
\]
(see \cite[Fig.~4]{Budney03}).
Then for $f,g\in\EC{n-1}{1}$, define $f\sharp g\in\EC{n-1}{1}$ by
\[
 f\sharp g(t;x):=
 \begin{cases}
  L_0f(t;x) & t\le\frac{1}{2},\\
  L_1g(t;x) & t\ge\frac{1}{2}
 \end{cases}
\]
(see Figure~\ref{fig:connected_sum}).
\begin{figure}
\includegraphics[scale=0.7]{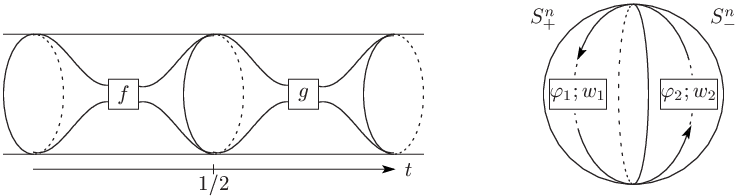}
\caption{Connected-sums on ${\rm EC}(k,M)$ and $\bbfemb{n}{1}$}
\label{fig:connected_sum}
\end{figure}
We notice that the elements in $\EC{n-1}{1}$ are maps $\R^1\times B^{n-1}\to\R^1\times B^{n-1}$ and we can compose them.
Using the composition, we can write $f\sharp g$ as
\[
 f\sharp g =(L_0f)\circ(L_1g) =(L_1g)\circ(L_0f).
\]

Poisson bracket $\lambda$ corresponds to the generator of $H_1(\calC_2(2))\cong\Z$ and can be described as follows.
For $0\le\alpha\le 1$ and $f,g\in\EC{n-1}{1}$, define $f*_{\alpha}g\in\EC{n-1}{1}$ by
\[
 f*_{\alpha}g:=(L_{\alpha}f)\circ(L_{1-\alpha}g).
\]
This defines the {\em $*$-operation}
\[
 *:I\times\EC{n-1}{1}^{\times 2}\to\EC{n-1}{1},\quad
 (\alpha ,f,g)\mapsto f*_{\alpha}g.
\]

\begin{rem}\label{rem:commutative}
The $*$-operation gives a homotopy between $f*_0g=f\sharp g$ and $f*_1g=g\sharp f$, and hence $\sharp$ is homotopy commutative.
In particular for homology classes $x,y$, we have $x*_{\alpha}y=xy$ for any $0\le\alpha\le 1$.
\end{rem}

The map $S^1\times\EC{n-1}{1}^{\times 2}\to\EC{n-1}{1}$ defined by
\[
 (\alpha,f,g)\mapsto
 \begin{cases}
  f*_{2\alpha}g   & 0\le\alpha\le\frac{1}{2},\\
  g*_{2\alpha-1}f & \frac{1}{2}\le\alpha\le 1
 \end{cases}
\]
corresponds to the map $\mu_2$ appearing in Budney's $\calC_2$-action, and the Poisson bracket $\lambda$ is induced by this map and a fixed generator of $H_1(S^1)\cong\Z$.

For any cubical chains $\xi :I^p\to\EC{n-1}{1}$ and $\eta :I^q\to\EC{n-1}{1}$, define cubical $(p+q+1)$-chains $\xi *\eta$ and $\eta*'\xi$ by
\begin{align*}
 &\xi*\eta:I\times I^p\times I^q\ni(\alpha,u,v)\mapsto\xi(u)*_{\alpha}\eta(v),\\
 &\eta*'\xi:I\times I^p\times I^q\ni(\alpha,u,v)\mapsto\eta(v)*_{\alpha}\xi(u),
\end{align*}
and extend them linearly on the cubical chain complex.
We also define cubical $(p+q)$-chains $\xi*_{\alpha}\eta:=\xi*\eta|_{\{\alpha\}\times I^{p+q}}$ and $\eta*'_{\alpha}\xi:=\eta*'\xi|_{\{\alpha\}\times I^{p+q}}$.
If $x$ and $y$ are cubical $p$- and $q$-cycles, then $\partial (x*y)=x*_1y-x*_0y$ and $\partial(y*'x)=y*'_1x-y*'_0x$.
Since $y*'_1x=x*_0y$ and $y*'_0x=x*_1y$,
\begin{equation}\label{eq:bracket}
 (-1)^{p-1}(x*y+y*'x)
\end{equation}
is a cubical $(p+q+1)$-cycle.
The cycle \eqref{eq:bracket} represents $\lambda (x,y)$.

\subsection{Poisson structure for $\bbfembd{n}{1}$}\label{ss:Poisson_emb}
We can transfer the above Poisson structure on $H_*(\EC{n-1}{1})$ to $H_*(\bbfembd{n}{1})$ via the homotopy equivalence $\tilde{\cl}$ defined in Proposition~\ref{prop:he}.
We illustrate this structure from a geometric view.

Figure~\ref{fig:connected_sum} explains the idea of connected-sum on $\bbfembd{n}{1}$.
For $\tilde{\varphi}\in\bbfembd{n}{1}$ and $0\le\alpha\le 1$, define
\[
 L_{\alpha}\tilde{\varphi}:=\tilde{\cl}(L_{\alpha}\tilde{\cl}{}^{-1}(\tilde{\varphi})).
\]
Roughly speaking $L_{\alpha}\tilde{\varphi}$ is $\tilde{\iota}$ with the cylinder $\tilde{\iota}([\frac{\alpha}{2},\frac{\alpha+1}{2}]\times B^{n-1})\cong I\times B^{n-1}$ replaced by $\tilde{\cl}{}^{-1}(\tilde{\varphi})$.
Then for $\tilde{\varphi}_1,\tilde{\varphi}_2\in\bbfembd{n}{1}$,
\begin{equation}\label{eq:connected-sum}
 \tilde{\varphi}_1\sharp\tilde{\varphi}_2:=(L_0\tilde{\varphi}_1)\circ(L_1\tilde{\varphi}_2)=
 \begin{cases}
  L_0\tilde{\varphi}_1 & \text{on }[0,\frac{1}{2}]\times B^{n-1},\\
  L_1\tilde{\varphi}_2 & \text{on }[\frac{1}{2},1]\times B^{n-1}.
 \end{cases}
\end{equation}

The $*$-operation for $\bbfembd{n}{1}$
\[
 *:I\times \bbfembd{n}{1}^{\times 2}\to\bbfembd{n}{1}
\]
is defined by using that for $\EC{n-1}{1}$;
\begin{equation}\label{eq:*-operation}
\begin{split}
 (\alpha ,\tilde{\varphi}_1,\tilde{\varphi}_2)\mapsto
 \tilde{\varphi}_1*_{\alpha}\tilde{\varphi}_2&:=\tilde{\cl}(\tilde{\cl}{}^{-1}(\tilde{\varphi}_1)*_{\alpha}\tilde{\cl}{}^{-1}(\tilde{\varphi}_2))\\
 &=(L_{\alpha}\tilde{\varphi}_1)\circ(L_{1-\alpha}\tilde{\cl}{}^{-1}(\tilde{\varphi}_2)).
\end{split}
\end{equation}
Roughly speaking $\tilde{\varphi}_1*_{\alpha}\tilde{\varphi}_2$ is $L_{\alpha}\tilde{\varphi}_1$ with $L_{\alpha}\tilde{\varphi}_1([\frac{1-\alpha}{2},\frac{2-\alpha}{2}]\times B^{n-1})$ replaced by $\cl{}^{-1}(\tilde{\varphi}_2)$.

\subsection{BV-operation}\label{ss:Hatcher}
We define an $S^1$-action on $\bbfemb{n}{1}$ as was done in \cite[p.~3]{Hatcher02}.
This action induces our BV-operation on $H_*(\bbfemb{n}{1})$.

For any $(\varphi ;w)\in\bbfemb{n}{1}$ and $\alpha\in S^1$, define $(\varphi ;w)^{\alpha}\in\bbfemb{n}{1}$ by
\[
 (\varphi;w)^{\alpha}(t):=(A\varphi(t-\alpha);Aw(t-\alpha)),
\]
where $A=w(-\alpha)^{-1}\in\SO(n+1)$ (acting on $S^n$ in the usual way).
Since $Aw(-\alpha)=I_{n+1}$ (and hence $A\varphi(-\alpha )=\infty$), $(\varphi;w)^{\alpha}$ is indeed in $\bbfemb{n}{1}$.

\begin{lem}\label{lem:S1-action}
The above formula defines an $S^1$-action on $\bbfemb{n}{1}$.
That is, we have $((\varphi ;w)^{\alpha})^{\beta}=(\varphi ;w)^{\alpha +\beta}$ and $(\varphi ;w)^0=(\varphi ;w)$.
\end{lem}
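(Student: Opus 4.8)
The plan is to reduce everything to the single observation that the matrix $A(\alpha,\varphi,w)$ is nothing but $w(-\alpha)^{-1}$, after which both action axioms become formal consequences of the group structure of $SO(n+1)$. Since $w(s)\in SO(n+1)$ is invertible for every $s$, the equation $Aw(-\alpha)=I_{n+1}$ has the unique solution $A=w(-\alpha)^{-1}$, and this solution already lies in $SO(n+1)$; so $A$ is well defined and the operation $(\varphi;w)\mapsto(\varphi;w)^{\alpha}$ makes sense. Along the way I would record that $(\varphi;w)^{\alpha}$ is again a framed based knot: writing $\psi(t):=A\varphi(t-\alpha)$ and $v(t):=Aw(t-\alpha)$, the last column of $v(t)$ is $Aw_n(t-\alpha)=A\varphi(t-\alpha)=\psi(t)$, the first column is $Aw_0(t-\alpha)=A\varphi'(t-\alpha)/\abs{\varphi'(t-\alpha)}=\psi'(t)/\abs{\psi'(t)}$ (using that $A$ is orthogonal and thus commutes with normalization), and $v(0)=Aw(-\alpha)=I_{n+1}$, which forces the two based-knot normalizations $\psi(0)=\infty$ and $\psi'(0)/\abs{\psi'(0)}=e_1$.

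For the identity $(\varphi;w)^{0}=(\varphi;w)$, I would simply note that at $\alpha=0$ we have $A=w(0)^{-1}=I_{n+1}$, so the defining formula collapses to $(\varphi(t);w(t))$.

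For the composition law $((\varphi;w)^{\alpha})^{\beta}=(\varphi;w)^{\alpha+\beta}$, set $(\psi;v):=(\varphi;w)^{\alpha}$ as above with $A=w(-\alpha)^{-1}$, and let $B:=B(\beta,\psi,v)$ be the matrix produced by applying the $\beta$-action to $(\psi;v)$. By the same observation $B=v(-\beta)^{-1}$, and since $v(-\beta)=Aw(-\alpha-\beta)=w(-\alpha)^{-1}w(-\alpha-\beta)$, I obtain $B=w(-\alpha-\beta)^{-1}w(-\alpha)$, whence $BA=w(-\alpha-\beta)^{-1}=:C$, which is exactly the matrix $A(\alpha+\beta,\varphi,w)$. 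Substituting, the knot part of $((\psi;v))^{\beta}$ is $B\psi(t-\beta)=BA\varphi(t-\alpha-\beta)=C\varphi(t-(\alpha+\beta))$ and the framing part is $Bv(t-\beta)=BAw(t-\alpha-\beta)=Cw(t-(\alpha+\beta))$, and both agree term by term with $(\varphi;w)^{\alpha+\beta}$. Here I am using that $S^1=\R/\Z$, so that the shifts compose as $(t-\beta)-\alpha=t-(\alpha+\beta)$ modulo $1$ and all the periodic functions are evaluated consistently.

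I do not expect a genuine obstacle: the only thing to get right is the bookkeeping of which matrix is applied to which term and the order in which the parameter shifts compose, and the whole computation telescopes once $A(\alpha,\varphi,w)=w(-\alpha)^{-1}$ is recognized as an $SO(n+1)$-valued cocycle. If one additionally wants the action to be continuous, so that ``$S^1$-action'' is read topologically, continuity of $(\alpha,\varphi,w)\mapsto(\varphi;w)^{\alpha}$ follows from smoothness of $w$ together with continuity of inversion in $SO(n+1)$; the only mild point to check is that $\alpha\mapsto w(-\alpha)^{-1}$ descends to $S^1$, which it does because $w$ is already defined on $S^1$.
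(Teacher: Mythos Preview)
Your argument is correct. The paper actually states this lemma without proof, treating it as a routine verification; your computation---identifying $A(\alpha,\varphi,w)=w(-\alpha)^{-1}$ and then using the group law in $SO(n+1)$ to see that $B(\beta,\psi,v)\cdot A(\alpha,\varphi,w)=w(-\alpha-\beta)^{-1}=A(\alpha+\beta,\varphi,w)$---is exactly the natural way to fill it in, and your additional check that $(\varphi;w)^{\alpha}$ is again a framed based knot is a welcome completeness remark.
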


This action can be interpreted for $\tilde{\varphi}\in\bbfembd{n}{1}$ via $\tilde{h}$ (see \eqref{eq:fibration_diagram2});
\[
 \tilde{\varphi}{}^{\alpha}:=\tilde{h}{}^{-1}(\tilde{h}(\tilde{\varphi})^{\alpha}).
\]

The fundamental class of $S^1$ induces our $\Delta$-operation through the above action;
\[
 \Delta :H_*(\bbfemb{n}{1})\to H_{*+1}(\bbfemb{n}{1}).
\]
We have $\Delta^2=0$ since $\Delta$ is induced by an $S^1$-action and $H_*(S^1)=\bigwedge^*\langle [S^1]\rangle$.

\begin{thm}\label{thm:BV}
$(H_*(\bbfemb{n}{1}),\cdot ,\lambda ,\Delta )$ is a BV-algebra.
\end{thm}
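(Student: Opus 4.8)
The Poisson-algebra axioms of Definition~\ref{def:BV_algebra} for the pair $(\cdot,\lambda)$ are inherited from Theorem~\ref{thm:Budney} through the homotopy equivalence of Proposition~\ref{prop:all_he}, and the identity $\Delta\circ\Delta=0$ was already noted above as a consequence of the $S^1$-action. Hence the only remaining content of the theorem is the BV-relation
\[
 \Delta(xy)=\Delta(x)y+(-1)^{\tilde{x}}x\Delta(y)+(-1)^{\tilde{x}}\lambda(x,y),
\]
and I would prove it at the level of cube-chains and then pass to homology. Each operation already has a chain-level model: the product $xy$ is represented by the map $\Theta$ of \S\ref{sss:connect-sum}, the bracket $\lambda(x,y)$ by the chains $x*y$ and $y*x$ through Lemma~\ref{lem:bracket_chain}, and $\Delta$ by the $S^1$-action of \S\ref{ss:Hatcher}. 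Writing $\sharp$ for $\Theta$, I represent the left-hand side by the $(\tilde{x}+\tilde{y}+1)$-cube chain
\[
 H\colon S^1\times I^{\tilde{x}}\times I^{\tilde{y}}\to\bbfemb{n}{1},\qquad(\alpha,\xi,\zeta)\mapsto\bigl(x(\xi)\sharp y(\zeta)\bigr)^{\alpha}.
\]

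The geometric core is the analysis of $H$ as $\alpha$ pushes the base point once around the connected sum. Since $\Theta(x,y)$ traverses a rescaled copy of $x$ on one half of $S^1$ and of $y$ on the other, the base point sweeps in turn through the region occupied by each factor and returns across the junction. I would subdivide $S^1$ into these two arcs and treat them separately. On the arc where the base point runs through the $x$-portion I would construct a homotopy that keeps $y$ fixed as a rigid tail and lets only the $x$-factor rotate; up to reparametrization this identifies the corresponding piece of $H$ with the chain $(\alpha,\xi,\zeta)\mapsto x(\xi)^{\alpha}\sharp y(\zeta)$, that is, with $\Delta(x)\cdot y$. Symmetrically the arc through the $y$-portion yields $x\cdot\Delta(y)$, the sign $(-1)^{\tilde{x}}$ coming from commuting the $S^1$-coordinate past the $\tilde{x}$ coordinates of $x$.

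Keeping the inactive factor fixed, however, is impossible rigidly: to hold $y$ in place while the base point sweeps by, one must push $y$ through the $x$-strand across the junction, which is exactly the maneuver defining the star-operation $*$ of \S\ref{sss:Poisson}. As $\alpha$ crosses from one arc to the other the configuration $x\sharp y$ is carried to $y\sharp x$, producing a copy of $x*y$, while closing the loop carries $y\sharp x$ back to $x\sharp y$ and produces $y*x$. By Lemma~\ref{lem:bracket_chain} these two correction chains assemble precisely into $(-1)^{\tilde{x}}\lambda(x,y)$, the bracket term of the BV-relation.

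I expect the main obstacle to be the precise construction of the two freezing homotopies together with the attendant sign bookkeeping. The difficulty is that the $S^1$-action of \S\ref{ss:Hatcher} does not merely translate the base parameter: it simultaneously applies the renormalizing rotation $A=A(\alpha,\varphi,w)\in SO(n+1)$ that returns the moving frame to $I_{n+1}$ at the new base point. Consequently the two factors do not split off freely, and the effect of $A$ must be absorbed into the homotopies and checked for compatibility with the framings $w$ on both $x$ and $y$. Moreover every step is an identity of oriented cube-chains, so the orientations of $I^{\tilde{x}}\times I^{\tilde{y}}$ induced by $\sharp$, by $*$, and by the extra $S^1$-coordinate of $\Delta$ must be matched against the conventions of Lemma~\ref{lem:bracket_chain} to reproduce the signs $(-1)^{\tilde{x}}$ and $(-1)^{\tilde{x}\tilde{y}}$; one must also track the seam contributions introduced by the subdivision, verifying that they either cancel in pairs or reassemble into the star-chains, so that $H$ is genuinely homologous, and not merely freely homotopic, to the sum of the three named chains.
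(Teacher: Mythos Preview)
Your proposal is correct and follows essentially the same Chas--Sullivan strategy as the paper: reduce to the BV relation, split $\Delta(xy)$ into two halves corresponding to the base point sweeping through the $x$- and $y$-portions, and recognize the star-chains $x*y$, $y*x$ as the correction terms. The paper packages your ``freezing homotopies'' more cleanly as a single explicit $2$-simplex chain $\Phi_{x,y}((u,v),\xi,\eta):=((x*_{-u'}y)(\xi,\eta))^{v''}$ whose three faces are already $x*y$, $\Delta_2(xy)$, and $\Delta(x)\cdot y$, thereby sidestepping the renormalizing-rotation and seam issues you anticipate.
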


\begin{proof}
We need to prove the last equality of Definition~\ref{def:BV_algebra}, that is, $\Delta$ is a derivation with respect to the product modulo $\lambda$;
\begin{equation}\label{eq:BV}
 \Delta (xy)-\Delta (x)y-(-1)^{\tilde{x}}x\Delta (y)=(-1)^{\tilde{x}}\lambda (x,y).
\end{equation}
This is proved in a similar way to \cite[Lemma~5.2]{ChasSullivan99}.
Define two operations $\Delta_i$ ($i=1,2$) as the ``first/last half'' of $\Delta$;
\[
 \Delta_i:I\times \bbfemb{n}{1}\to\bbfemb{n}{1},\quad \Delta_i(\alpha,\sigma)=\sigma^{(\alpha +i-1)/2}.
\]
Let $\Delta^2:=\{ 0\le\beta\le\alpha\le 1\}$ be the standard $2$-simplex.
Define
\[
 \Phi:\Delta^2\times \bbfemb{n}{1}^{\times 2}\to\bbfemb{n}{1}
\]
by
\[
 \Phi((\alpha ,\beta),\sigma_1,\sigma_2):=(\sigma_1*_{\alpha}\sigma_2)^{\beta /2}.
\]
\begin{figure}
\includegraphics[scale=0.6]{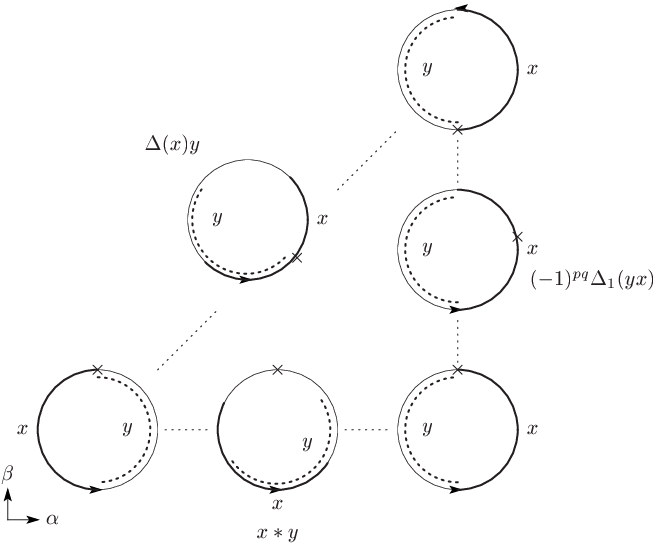}
\caption{The chain $\Phi_*(\id,x,y)$; the symbol $\times$ indicates the basepoint}
\label{fig:BV_proof}
\end{figure}Choose cubical $p$- and $q$-cycles $x=\sum a_ix_i$ and $y=\sum b_jy_j$.
Then $\Phi_*(\id,x,y)=\sum a_ib_j\Phi\circ(\id_{\Delta^2}\times x_i\times y_j)$ is a $(p+q+2)$-chain whose boundary comes from $\partial\Delta^2$.
We can see that
\begin{enumerate}[(i)]
\item $\{\alpha=\beta\}\subset\partial\Delta^2$ corresponds to a $(p+q+1)$-cycle homologous to $\Delta (x)y$,
\item $\{\alpha=1\}\cup\{\beta=0\}\subset\partial\Delta^2$ corresponds to a $(p+q+1)$-cycle homologous to $x*y+\Delta_1(x*_1y)$ (see Figure~\ref{fig:BV_proof}).
\end{enumerate}
(ii) is immediate from the definition, and (i) follows from Lemma~\ref{lem:a=b} below.
Thus
\begin{equation}\label{eq:proof1}
 x*y+\Delta_1(x*_1y)-\Delta (x)y\sim 
 0.
\end{equation}
Similarly, considering the $(p+q+2)$-chain $(y*'_{\alpha}x)^{\beta/2}$, we have
\begin{equation}\label{eq:proof2}
 y*'x+\Delta_1(y*'_1x)-\Delta(y)*'_0x\sim 0.
\end{equation}
By definition we have equalities of $(p+q+1)$-chains
\begin{equation}\label{eq:proof3}
\Delta_1(y*'_1x)=\Delta_2(y*'_0x)=\Delta_2(x*_1y).
\end{equation}
We also see, by exchanging $u\in I^p$ and $v\in I^q$, that the cycle $\Delta(y)*'_0x$ is homologous to $(-1)^{pq}\Delta(y)*_0x\sim(-1)^px\Delta(y)$.
Substituting this and \eqref{eq:proof3} into \eqref{eq:proof2} we have
\begin{equation}\label{eq:proof4}
 y*'x+\Delta_2(x*_1y)-(-1)^px\Delta(y)\sim 0.
\end{equation}
Then \eqref{eq:proof1}, \eqref{eq:proof4} and the facts $\Delta_1(z)+\Delta_2(z)\sim\Delta (z)$ (for any cycle $z$) and $x*_1y\sim xy$ (see Remark~\ref{rem:commutative}) imply \eqref{eq:BV}.
\end{proof}

\begin{lem}\label{lem:a=b}
$\Phi|_{\{\alpha=\beta\}\times\bbfemb{n}{1}^{\times 2}}:I\times\bbfembd{n}{1}^{\times 2}\to\bbfembd{n}{1}$, defined by $(\alpha,\tilde{\varphi}_1,\tilde{\varphi}_2)\mapsto(\tilde{\varphi}_1*_{\alpha}\tilde{\varphi}_2)^{\alpha/2}$, is homotopic to $(\alpha,\tilde{\varphi}_1,\tilde{\varphi}_2)\mapsto(\tilde{\varphi}_1)^{\alpha}\sharp\tilde{\varphi}_2$.
\end{lem}

\begin{proof}
For $\tilde{\varphi}_1,\tilde{\varphi}_2\in\bbfembd{n}{1}$, the embedding $\tilde{\varphi}_1*_{\alpha}\tilde{\varphi}_2$ is given by \eqref{eq:*-operation}, which is equal to $L_{\alpha}\tilde{\varphi}_1$ at $t=-\alpha/2\equiv\frac{2-\alpha}{2}$.
The embedding $\tilde{h}(\tilde{\varphi}_1*_{\alpha}\tilde{\varphi}_2)^{\alpha/2}\in\bbfemb{n}{1}$ is $\tilde{h}(\tilde{\varphi}_1*_{\alpha}\tilde{\varphi}_2)$ rotated by an action of a matrix which is determined by the value $\tilde{h}(\tilde{\varphi}_1*_{\alpha}\tilde{\varphi}_2)(-\alpha/2)=\tilde{h}(L_{\alpha}\tilde{\varphi}_1)(-\alpha/2)$.
Thus $(\tilde{\varphi}_1*_{\alpha}\tilde{\varphi}_2)^{\alpha/2}$ is homotopic to
\begin{align*}
 (t;x)\mapsto
 &A(L_{\alpha}\tilde{\varphi}_1(-\alpha/2))\cdot(L_{\alpha}\tilde{\varphi}_1(L_{1-\alpha}\tilde{\cl}{}^{-1}(\tilde{\varphi}_2)(t-\frac{\alpha}{2},x)))\\
 &=
  (L_{\alpha}\tilde{\varphi}_1)^{\alpha/2}
  (L_1\tilde{\cl}{}^{-1}(\tilde{\varphi}_2)(t,x))\\
 &\sim(L_0\tilde{\varphi}_1)^{\alpha}(L_1\tilde{\cl}{}^{-1}(\tilde{\varphi}_2)(t,x)),
\end{align*}
here the equality follows from $l_{1-\alpha}(t-\frac{\alpha}{2})=l_1(t)$ and $l_{1-\alpha}^{-1}(u)=l_1^{-1}(u)-\frac{\alpha}{2}$, and then we use Lemma~\ref{lem:shift} below.
The proof is completed by considering the homotopy from any $f\in\EC{n-1}{1}$ to $L_0f$ given by
\[
 \beta\mapsto\hat{s}_{\beta}^{-1}\circ f\circ\hat{s}_{\beta}
\]
where $\hat{s}_{\beta}:=\hat{l}_{\beta}\times\id_{B^{n-1}}$ and $\hat{l}_{\beta}(t):=(1+\beta)t$, and translating this homotopy to a homotopy on $\bbfemb{n}{1}$ via $\tilde{\cl}$.
\end{proof}

\begin{lem}\label{lem:shift}
The map $I\times\bbfemb{n}{1}\to\bbfemb{n}{1}$ given by $(\alpha,\sigma)\mapsto(L_{\alpha}\sigma)^{\alpha/2}$ (where $L_{\alpha}\sigma$ is defined through the homotopy equivalence $\tilde{h}$) is equal to $(\alpha,\sigma)\mapsto(L_0\sigma)^{\alpha}$.
\end{lem}

\begin{proof}
First by definition $L_{\alpha}\sigma(t)=R_{\alpha}\cdot L_0\sigma(t-\frac{\alpha}{2})$ where $R_{\alpha}$ is the rotation by $\pi\alpha$ in the $x_1x_{n+1}$-plane.
Putting $L_0\sigma=(\varphi;w)$, we have
\[
 (L_{\alpha}\sigma)^{\alpha/2}(t)=(AR_{\alpha}\varphi((t-\frac{\alpha}{2})-\frac{\alpha}{2});AR_{\alpha}w((t-\frac{\alpha}{2})-\frac{\alpha}{2}))
\]
where $A=(R_{\alpha}w(-\frac{\alpha}{2}-\frac{\alpha}{2}))^{-1}=w(-\alpha)^{-1}R^{-1}_{\alpha}$.
Thus
\[
 (L_{\alpha}\sigma)^{\alpha/2}(t)=(w(-\alpha)^{-1}\varphi(t-\alpha);w(-\alpha)^{-1}w(t-\alpha))=(L_0\sigma)^{\alpha}(t).\qedhere
\]
\end{proof}

\begin{prop}\label{prop:Delta_nonzero}
$\Delta$ is nontrivial when $n\ge 3$ is odd.
\end{prop}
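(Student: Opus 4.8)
The plan is to produce a single homology class on which $\Delta$ is visibly nonzero, and to locate all of the parity-dependence in its construction. The first observation is that the naive candidate fails: the standard round unknot (the great circle in the $e_1e_{n+1}$-plane, framed by the rotation $R(t)$ of angle $2\pi t$ in that plane) is a \emph{fixed point} of the $S^1$-action of Lemma~\ref{lem:S1-action}, because the normalizing matrix $A(\alpha)$ equals $R(\alpha)$ and $R(\alpha)R(t-\alpha)=R(t)$, so that both $\varphi^{\alpha}=\varphi$ and $w^{\alpha}=w$. Thus $\Delta$ annihilates $H_0$ of the unknot component, and any nonzero value must be carried by a positive-dimensional \emph{family} of knots. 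This is exactly what the hypothesis on $n$ will provide.

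The cycle I would use is an $n$-parameter family of framed round unknots indexed by $S^n$. Fix a nowhere-vanishing vector field $V$ on $S^n$; such a field exists precisely because $\chi(S^n)=0$, i.e.\ because $n$ is odd. For each $x\in S^n$ let $\gamma_x$ be the great circle through $x$ in the direction $V(x)$, rotated into based position by a matrix $\rho(x)\in SO(n+1)$ carrying $(x,V(x))$ to $(\infty,e_1)$ and framed accordingly. After a coherent choice of the $\rho(x)$ this defines a class $z\in H_n(\bbfemb{n}{1})$, and $\Delta z\in H_{n+1}(\bbfemb{n}{1})$ is represented by the explicit $(n+1)$-family obtained by pushing the base point once around each $\gamma_x$.

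To see that $\Delta z\neq 0$ I would map everything to the free loop space of the ambient sphere via the position map $P:\bbfemb{n}{1}\to\Lambda S^n$, $(\varphi;w)\mapsto\varphi$. Up to the $SO(n+1)$-twist coming from $A(\alpha)$ — which I would absorb by passing to the Borel construction $(\Lambda S^n)_{hSO(n+1)}$, using the framing $w$ to supply the bundle coordinate, or else handle by an explicit cochain pairing — the map $P$ intertwines $\Delta$ with the rotation (string-topology) operator on $\Lambda S^n$. The image $P_*z$ is the standard ``great circles in the direction $V$'' class, the rotation operator does not annihilate it, and pairing $\Delta(P_*z)$ with the corresponding degree-$(n+1)$ cohomology class of $\Lambda S^n$ gives a nonzero number, whence $\Delta z\neq 0$.

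The main obstacle is the realization step in the second paragraph: turning the geometrically evident \emph{unbased} $S^n$-family into an honest cycle of \emph{based} framed knots requires a coherent choice of the rotations $\rho(x)$ (equivalently, a trivialization controlling the tangential framing along the family), and it is here — not in any abstract loop-space computation — that $\chi(S^n)=0$ is indispensable. For even $n$ the field $V$ does not exist and the same construction degenerates to a boundary, which is why the statement is asserted only for odd $n$. Once the family is built, verifying that the twist can be discarded and that the resulting pairing is nonzero is routine but is the part that must be carried out carefully.
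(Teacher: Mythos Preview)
Your approach is entirely different from the paper's, and as written it has a real gap, not just missing details.  The paper's argument is two lines: the BV identity
\[
\Delta(xy)=\Delta(x)\,y+(-1)^{\tilde x}x\,\Delta(y)+(-1)^{\tilde x}\lambda(x,y)
\]
forces at least one of $\Delta(xy)$, $\Delta(x)$, $\Delta(y)$ to be nonzero whenever $\lambda(x,y)\ne 0$, and the nontriviality of $\lambda$ for odd $n\ge 3$ is quoted from \cite{BudneyCohen05} (the case $n=3$) and \cite{K07} (odd $n>3$).  No explicit cycle or detection map is constructed.

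The gap in your construction is in the detection step, and it is not just a matter of care.  Once you rotate each great circle $\gamma_x$ into based position by $\rho(x)$, the underlying curve of every $z(x)$ is the \emph{same} parametrized loop $t\mapsto\cos(2\pi t)\,e_{n+1}+\sin(2\pi t)\,e_1$; only the normal part of the framing can vary with $x$.  Consequently the position map $P\colon\bbfemb{n}{1}\to\Lambda S^n$ collapses your entire $S^n$-family to a single point, so $P_*z=0$ in degree $n$ and cannot be ``the standard great-circles-in-the-direction-$V$ class'' as you assert.  The twist you propose to absorb does not repair this: with $P_*z=0$ there is nothing left in $\Lambda S^n$ to pair against.

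There is also a second obstruction you have underestimated.  A coherent choice of $\rho(x)$ with $\rho(x)\cdot(x,V(x))=(\infty,e_1)$ is exactly an extension of the $2$-frame $(x,V(x))$ to a full frame of $\R^{n+1}$, continuously in $x$; this is a parallelization of $S^n$ and exists only for $n\in\{1,3,7\}$, not for all odd $n$.  So the family $z$ is not even defined for $n=5,9,11,\dots$.  Any salvage of your idea would have to take place entirely inside the framing factor (essentially $\Omega SO(n-1)$) and would be a different argument from the one you sketched; the paper's route via the BV identity and the known nontriviality of $\lambda$ bypasses all of this.
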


\begin{proof}
It is an easy consequence of the third equation from Definition~\ref{def:BV_algebra} that, if $\lambda (x,y)\ne 0$, then one of $\Delta (xy)$, $\Delta (x)$ and $\Delta (y)$ is not zero.
The non-triviality of $\lambda$ is proved in \cite{BudneyCohen05} (when $n=3$) and in \cite{K07} (when $n>3$ is odd).
\end{proof}

\subsection{The string bracket}\label{ss:string_bracket}
Similarly to \cite[\S6]{ChasSullivan99}, consider the principal $S^1$-bundle
\[
 \pi :ES^1\times\bbfemb{n}{1}\to ES^1\times_{S^1}\bbfemb{n}{1}.
\]
Let $p:E\to ES^1\times_{S^1}\bbfemb{n}{1}$ be the vector bundle of rank two associated with $\pi$, and $E_0$ the complement of the zero section of $E$.
The Gysin exact sequence for $p$ can be written as
\begin{multline*}
 \dots \to H_i(\bbfemb{n}{1})\xrightarrow{\mathrm{E}}H^{S^1}_i(\bbfemb{n}{1})\\
 \xrightarrow{c}H^{S^1}_{i-2}(\bbfemb{n}{1})\xrightarrow{\mathrm{M}} H_{i-1}(\bbfemb{n}{1})\to\dots
\end{multline*}
($\mathrm{E}$ is induced by $E_0\hookrightarrow E$, $c$ is given by capping the Euler class, and $\mathrm{M}$ is the connecting homomorphism).
Define the bracket $\{ -,-\}$ on $H^{S^1}_*(\bbfemb{n}{1})$ by
\[
 \{ x,y\} :=(-1)^{\tilde{x}}\mathrm{E}(\mathrm{M}(x)\mathrm{M}(y)).
\]
As a corollary of Theorem~\ref{thm:BV} we obtain the following.

\begin{cor}
$\{-,-\}$ is a degree two Lie bracket on $H_*^{S^1}(\bbfemb{n}{1})$.
\end{cor}

\begin{proof}
The proof is the same as that of \cite[Theorem~6.1]{ChasSullivan99}, which formally uses the BV-structure on $H_*(\bbfemb{n}{1})$, the fact that the map $\Delta$ induced by the $S^1$-action can be described as $\Delta=M\circ\mathrm{E}$, and the exactness of the Gysin sequence.
\end{proof}

\section{BV-structure on the Hochschild homology}\label{s:BV_Hochschild}
This section is independent of the previous one.
In this section we define a BV-algebra structure on the {\em Hochschild homology} $HH_*(\calO )$ associated with a {\em cyclic multiplicative operad $\calO$} in the category of {\em graded} modules.

One motivation is as follows.
When $n\ge 4$, the space $\EC{n-1}{1}$ is weakly equivalent to the homotopy totalization of an operad $fK_n$, called the \emph{framed Kontsevich operad}, which is weakly equivalent to the framed little $n$-disks operad $f\calC_n$ \cite{Salvatore06}.
There is a spectral sequence \cite{Bousfield87} converging to the homology of the homotopy totalization of a topological multiplicative operad ($fK_n$ is one of such operads), and its $E^2$-term is the Hochschild homology associated with the homology of the operad.
In general, for any multiplicative operad $\calO$ of modules, its Hochschild homology $HH_*(\calO )$ admits a Poisson algebra structure \cite{Tourtchine05}, and if moreover $\calO$ is a cyclic operad over {\em non-graded} modules, then $HH_*(\calO )$ admits a BV-algebra structure \cite{Tradler02, Menichi04}.
For $\EC{n-1}{1}$, the operad $\calO=H_*(f\calC_n)$ is a multiplicative operad of graded modules, and the Poisson structure on $HH_*(\calO)$ is proved in Salvatore's draft to coincide with that described in \cite{Budney03}.
Moreover, $f\calC_n$ is equivalent to a cyclic operad (of ``conformal $n$-balls'') \cite{Budney08}, and it turns out that $H_*(f\calC_n)$ is a cyclic multiplicative operad of {\em graded} modules.
So it is natural to ask whether $HH_*(\calO )$ admits a suitable BV-algebra structure when $\calO$ is a cyclic operad of {\em graded} modules, in such a way that it coincides with that discussed in \S\ref{s:BV} in the case of embedding spaces.
Our construction is a direct analogue to the non-graded cases.

As for operads, we follow the convention of \cite{McClureSmith04}.

\subsection{Hochschild homology}\label{ss:Hochschild}
For an operad $\calO$ and $x\in\calO (l)$, $y\in\calO (m)$, define
\[
 x\circ_i y:=x(\id ,\dots ,\id ,y,\id ,\dots ,\id )\in\calO (l+m-1),
\]
where $y$ sits in the $i$-th place, and $\id\in\calO (1)$ is the identity element.
When $\calO$ is an operad of graded modules, we denote by $\tilde{x}$ the grading of $x$ in the graded module $\calO (l)$, that is, $x\in\calO (l)_{\tilde{x}}$.

Let $\calO$ be a multiplicative operad \cite[Definition 10.1]{McClureSmith04} of graded modules;
namely $\calO$ is a non-symmetric operad of graded modules endowed with a morphism $\mathcal{ASSOC}\to\calO$, where $\mathcal{ASSOC}$ is the \emph{associative operad} given by $\mathcal{ASSOC}(n)=\{*\}$ for all $n\ge 0$.
We denote the image of $\mathcal{ASSOC}(2)=\{*\}\to\calO(2)$ by $\mu\in\calO (2)$ and call it the \emph{multiplication}.
The collection $\calO =\{\calO (k)\}_{k\ge 0}$ admits a cosimplicial module structure; the cosimplicial structure maps
\[
 d^i:\calO(k-1)\to\calO (k),\quad s^i:\calO (k+1)\to\calO (k)\quad (0\le i\le k)
\]
are defined as in \cite[\S10]{McClureSmith04} by using $\mu$ and the unit element $e\in\calO (0)$.
The grading-preserving map
\[
 \partial_k : \calO (k)\to\calO (k+1),\quad \partial_k:=d^0-d^1+\dots +(-1)^{k+1}d^{k+1}
\]
satisfies $\partial_{k+1}\partial_k=0$.
Thus we obtain a cochain complex $\{ \calO ,\partial\}$ with {\em total degree}
\[
 \abs{x}:=\tilde{x}-l
 \quad\text{for}\quad
 x\in\calO (l)
\]
(this agrees with the homological degree in the spectral sequence).
We call this the {\em Hochschild complex associated with $\calO$}.

Define the {\em normalized Hochschild complex} $\tilde{\calO}$ by
\[
 \tilde{\calO}(k):=\bigcap_{0\le i\le k-1}\ker \{s^i :\calO (k)\to\calO (k-1)\} .
\]
The following is a well-known fact.

\begin{lem}[see {\cite[III, Theorem~2.1]{GoerssJardine}} for the simplicial version]\label{lem:quasi-iso}
The map $\partial_k$ restricts to $\partial_k:\tilde{\calO}(k)\to\tilde{\calO}(k+1)$.
The inclusion map $\tilde{\calO}\to\calO$ is a quasi-isomorphism.
\end{lem}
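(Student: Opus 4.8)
The overall plan is to recognize this as the (dual) Dold--Kan normalization theorem and to verify that the internal grading causes no trouble. Since the coface and codegeneracy maps $d^i,s^i$ and the differential $\partial_k$ all preserve the grading, the collection $\{\calO(k)\}$ is a cosimplicial object in the abelian category of graded modules, so every formal statement valid for cosimplicial modules applies verbatim. I would split the lemma into its two assertions: that $\partial_k$ preserves $\tilde{\calO}$, and that the inclusion is a quasi-isomorphism.

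For the first assertion I would argue directly from the cosimplicial identities. Fix $x\in\tilde{\calO}(k)$, so that $s^ix=0$ for $0\le i\le k-1$, and fix $0\le j\le k$; the goal is $s^j\partial_k x=0$. Expanding $s^j\partial_k x=\sum_{i=0}^{k+1}(-1)^i s^j d^i x$ and applying $s^jd^i=d^is^{j-1}$ for $i<j$, $s^jd^j=s^jd^{j+1}=\id$, and $s^jd^i=d^{i-1}s^j$ for $i>j+1$, every term with $i<j$ produces a factor $s^{j-1}x$ and every term with $i>j+1$ a factor $s^jx$; the index bounds ($j\le k$ forces $j-1\le k-1$, and the existence of a term with $i>j+1\le k+1$ forces $j\le k-1$) keep these codegeneracies in the range where $x$ is normalized, so they vanish. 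The two remaining terms $i=j,j+1$ contribute $(-1)^jx+(-1)^{j+1}x=0$. Hence $\partial_kx\in\tilde{\calO}(k+1)$.

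For the quasi-isomorphism I would set up the standard filtration by partial kernels $N^{(p)}(k):=\bigcap_{0\le i<\min(p,k)}\ker\{s^i:\calO(k)\to\calO(k-1)\}$, so that $N^{(0)}=\calO$ and $N^{(p)}=\tilde{\calO}$ in every degree $k\le p$. The computation above, using only $s^ix=0$ for $i<p$, shows each $N^{(p)}$ is a subcomplex, giving a tower $\tilde{\calO}=\cdots\subseteq N^{(1)}\subseteq N^{(0)}=\calO$ that stabilizes to $\tilde{\calO}$ in each fixed degree. It therefore suffices to show that each inclusion $N^{(p)}\hookrightarrow N^{(p-1)}$ induces an isomorphism on cohomology, after which a fixed cohomology group $H^m$ is reached from $H^m(\calO)$ in finitely many (namely $m+1$) steps. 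The main and essentially only nontrivial point is to exhibit, on $N^{(p-1)}$, a grading-preserving contracting homotopy realizing the deformation retraction onto $N^{(p)}$: one builds it from the operators $s^{p-1}$ and $d^p$, and the cosimplicial identities force the homotopy relation $\partial h+h\partial=1-\iota\rho$ with $\rho$ the projection into $\ker s^{p-1}$. This is exactly the contracting-homotopy step in the classical normalization theorem (cf.\ \cite{Loday92}); because every operator in sight preserves the grading, $h$ is a morphism of graded complexes and the graded hypothesis is handled for free. Concatenating these equivalences degreewise then yields that $\tilde{\calO}\hookrightarrow\calO$ is a quasi-isomorphism.
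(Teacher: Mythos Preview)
Your argument is correct. The cosimplicial-identity computation for the first assertion is clean, and the filtration-by-partial-kernels reduction to a single contracting-homotopy step is exactly the classical normalization argument, dualized; since all the structure maps preserve the internal grading, nothing extra is needed in the graded setting.

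As for comparison: the paper does not actually prove this lemma. It introduces it with ``The following is a well-known fact'' and offers no argument at all. So your proposal is not a different route so much as a route where the paper gives none; what you have written is the standard proof one would find behind the citation to \cite{Loday92}, adapted to the cosimplicial (cochain) direction.
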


A Poisson algebra structure on the Hochschild homology $HH(\calO ):=H_*(\calO ,\partial )$ was defined in \cite{Tourtchine05};
for $x\in\calO (l)$ and $y\in\calO (m)$, define two operations
\begin{alignat*}{2}
 &x\bullet y:=(-1)^{l\tilde{y}}\mu (x,y)&\quad&\in\calO (l+m)_{\tilde{x}+\tilde{y}},\\
 &[x,y]:=x\bar{\circ}y-(-1)^{(\abs{x}+1)(\abs{y}+1)}y\bar{\circ}x&\quad&\in\calO (l+m-1)_{\tilde{x}+\tilde{y}},
\end{alignat*}
where $\bar{\circ}$ is defined by
\[
 x\bar{\circ}y:=\sum_{1\le i\le l}(-1)^{(m-1)(l-i)+(l-1)\tilde{y}}x\circ_iy,
\]
which should be compared with the star-operation $*$ (\S\ref{ss:Poisson_emb}).

\begin{thm}[\cite{Tourtchine05}]
If $\calO$ is a multiplicative operad of graded modules, then $HH(\calO )$ is a Poisson algebra with respect to $\bullet $, $[\cdot ,\cdot ]$ and the degree $\abs{\cdot}$.
\end{thm}

\subsection{Connes' boundary operation}\label{ss:Connes}
Suppose in addition that $\calO$ is a {\em cyclic multiplicative operad} (see \cite[Definition~3.11]{Menichi04});
namely, $\calO$ is a multiplicative operad with grading-preserving linear maps
\[
 \tau_k :\calO (k)\to\calO (k)
\]
satisfying $\tau_k^{k+1}=\id$, $\tau_0(e)=e$, $\tau_2(\mu )=\mu$ and, for $x\in\calO (l)$ and $y\in\calO (m)$,
\[
 \tau_{l+m-1}(x\circ_i y)=
 \begin{cases}
  \tau_l(x)\circ_{i-1}y              & i\ge 2,\\
  (-1)^{\tilde{x}\tilde{y}}\tau_m(y)\circ_m\tau_l(x) & i=1.
 \end{cases}
\]

\begin{lem}[{\cite[Theorem 1.4 (a)]{Menichi04}}]\label{lem:cocyclic}
Let $\calO$ be a cyclic multiplicative operad of graded modules.
The collection $\{\tau_k\}_{k\ge 0}$ of maps makes the cosimplicial module $\calO$ into a cocyclic module; that is,
for $1\le i\le k$, we have
\[
 \tau_k d^i=d^{i-1}\tau_{k-1},\quad
 \tau_k s^i=s^{i-1}\tau_{k+1}.
\]
\end{lem}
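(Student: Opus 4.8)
The plan is to prove both families of identities by unwinding the cosimplicial structure maps into operadic compositions and then applying the cyclic compatibility axiom term by term. Recall from \cite[\S10]{McClureSmith04} that $d^i$ and $s^i$ are built from $\mu\in\calO(2)$ and $e\in\calO(0)$: for $x\in\calO(k-1)$ one has $d^0(x)=\mu\circ_2 x$, $d^i(x)=x\circ_i\mu$ for $1\le i\le k-1$, and $d^k(x)=\mu\circ_1 x$, while $s^i$ is an insertion of the nullary element $e$ into the appropriate slot. Since $\calO$ is an operad of graded modules and $\partial$ is grading-preserving, both structure elements must have degree zero, $\tilde\mu=\tilde e=0$; this single observation is what lets the graded computation run in parallel with Menichi's ungraded one \cite{Menichi04}.

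First I would verify the coface relation $\tau_k d^i=d^{i-1}\tau_{k-1}$ by cases on $i$. For $2\le i\le k-1$ the term $d^i(x)=x\circ_i\mu$ has insertion index $\ge 2$, so the first branch of the cyclic axiom gives directly $\tau_k(x\circ_i\mu)=\tau_{k-1}(x)\circ_{i-1}\mu=d^{i-1}\tau_{k-1}(x)$. The two boundary cases $i=1$ (where $d^1(x)=x\circ_1\mu$) and $i=k$ (where $d^k(x)=\mu\circ_1 x$) are the only ones using the wrap-around branch $\tau_{l+m}(x\circ_1 y)=(-1)^{\tilde x\tilde y}\tau_m(y)\circ_m\tau_l(x)$; here $\tau_2(\mu)=\mu$ turns the output back into a single insertion of $\mu$, which one recognizes as $d^0\tau_{k-1}(x)$ and $d^{k-1}\tau_{k-1}(x)$ respectively. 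The sign $(-1)^{\tilde x\tilde\mu}$ that this branch could contribute is trivial because $\tilde\mu=0$, so no sign survives.

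The codegeneracy relation is handled in exactly the same manner: expand $s^i$ as an insertion of $e$, apply the cyclic axiom (for the relevant range of $i$ the insertion index is $\ge 2$, so only the first, non-wrap-around, branch occurs), and use $\tau_0(e)=e$; the index shift built into the axiom matches the codegeneracy on the right-hand side. Again the only sign the axiom could introduce carries a factor $\tilde e=0$ and therefore disappears. I note that the hypothesis $\tau_k^{k+1}=\id$ is not needed for these identities; it enters only in the $i=0$ boundary relations, which lie outside the scope of this lemma.

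I expect the computation to be essentially bookkeeping, and the one place that genuinely requires care is the two wrap-around cases of the coface relation, where one must check that the transposition of factors prescribed by the cyclic axiom lands on precisely the intended coface and leaves behind no residual sign. Because Menichi's proof already organizes this case analysis in the ungraded setting, the only new content is the verification that every sign produced by the graded cyclic axiom involves $\tilde\mu$ or $\tilde e$ and hence vanishes, so that the graded statement reduces to the ungraded one.
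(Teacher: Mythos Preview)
The paper does not supply its own proof of this lemma: it is stated with a citation to \cite[Theorem~1.4(a)]{Menichi04} and no proof environment follows. Your proposal is therefore not competing with anything in the paper itself, but it is essentially a correct reconstruction of Menichi's argument, carried over to the graded setting. The decisive observation you make---that $\tilde{\mu}=\tilde{e}=0$ because $\partial$ is grading-preserving, so every sign $(-1)^{\tilde{x}\tilde{\mu}}$ or $(-1)^{\tilde{x}\tilde{e}}$ produced by the wrap-around branch of the cyclic axiom is trivial---is exactly the point that reduces the graded statement to the ungraded one, and the case split on $i$ (interior indices via the $i\ge 2$ branch, the two extreme cofaces via the $i=1$ branch together with $\tau_2(\mu)=\mu$) is the standard and correct route.
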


Define the operation $B_k:\calO (k)\to\calO (k-1)$ by
\begin{equation}\label{eq:Connes}
 B_k(x):=(-1)^{\tilde{x}}\sum_{1\le i\le k}(-1)^{i(k-1)}\tau_{k-1}^{-i}s^{k-1}\tau_k (1-\tau_k)(x).
\end{equation}
This map is called {\em Connes' boundary operation} (for non-graded simplicial version, see \cite[(2.1.7.1)]{Loday92}).
Indeed $B$ is a boundary map:

\begin{lem}[{\cite[\S2]{Loday92}}]\label{lem:indeed_boundary}
We have $B_kB_{k+1}=0$ and $B_{k+1}\partial_k=-\partial_{k-1}B_k$.
\end{lem}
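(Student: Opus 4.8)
The plan is to recognize that the pair $(\calO,\partial,B)$ is being promoted to a \emph{mixed complex}: since $\partial_{k+1}\partial_k=0$ is already in hand, what remains is exactly the two defining identities of Connes' operator, $B_kB_{k+1}=0$ and $B_{k+1}\partial_k+\partial_{k-1}B_k=0$. I would follow the formal argument of Loday (cited above) in the cyclic-operad guise of Menichi, the only genuinely new ingredient being the Koszul signs dictated by the grading $\tilde{\cdot}$. The two identities are degree $+2$ and degree $0$ statements for the operators $\partial$ (total degree $-1$) and $B$ (total degree $+1$), so the target is an honest anticommutation, which is why the sign $-$ appears in the second formula.

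First I would record the factorization $B_k=(-1)^{\tilde x}\,N_{k-1}\,s^{k-1}\,\tau_k(1-\tau_k)$, where, after using $\tau_m^{m+1}=\id$, the coefficient sum in the definition collapses to the signed norm $N_m:=\sum_{0\le j\le m}(-1)^{jm}\tau_m^{\,j}$ acting at level $m$. Then, starting from the cocyclic identities $\tau_kd^i=d^{i-1}\tau_{k-1}$ and $\tau_ks^i=s^{i+1}\tau_{k+1}$ of Lemma~\ref{lem:cocyclic} together with the cosimplicial relations among the $d^i$ and $s^i$, I would establish the four structural relations that drive Loday's argument, all with their graded signs: (i) the full coboundary and the reduced one $\partial'_k:=\sum_{0\le i\le k}(-1)^id^i$ are conjugate through $1-\tau$, i.e.\ $\partial_k(1-\tau_k)=\pm(1-\tau_{k+1})\partial'_k$; (ii) the norm intertwines them, $N_{k+1}\partial_k=\pm\partial'_kN_k$; (iii) the rotated codegeneracy assembled from $s^{k-1}$ and $\tau_k$ furnishes a contracting homotopy for $\partial'$, of the shape $\partial'h+h\partial'=\id$; and (iv) a telescoping relation collapsing $1-\tau$ against the norm. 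Granting the correctly signed (i)--(iv), both assertions are formal: substituting the factorization into each of $B_kB_{k+1}$ and $B_{k+1}\partial_k+\partial_{k-1}B_k$, one moves $\partial$ across $1-\tau$ and $N$ by (i)--(ii), replaces $\partial'h+h\partial'$ by $\id$ via (iii), and clears the residual terms by the norm telescoping (iv), arriving at $0$.

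The main obstacle lies entirely in (iv) and, more broadly, in the sign bookkeeping. In the ungraded theory (iv) is the elementary identity $(1-t)N=0$, valid because the \emph{unsigned} norm satisfies $tN=N$. Here the normalization forces the \emph{signed} norm, for which $\tau_mN_m=(-1)^mN_m$, so that $(1-\tau_m)N_m$ visibly fails to vanish at odd $m$; consequently neither $B_kB_{k+1}=0$ nor the anticommutation can be read off from a single-level cancellation. The point is that the intervening operators do not commute on the nose: when $1-\tau$ is transported leftward through $s^{k-1}$ and $\tau_k$ to meet the norm, the cocyclic relations of Lemma~\ref{lem:cocyclic}, together with the Koszul sign $(-1)^{\tilde x}$ in $B$ and the sign $(-1)^{\tilde x\tilde y}$ in the cyclic-operad axiom for $\tau_{l+m}(x\circ_1 y)$, contribute precisely the compensating signs that convert the parity-sensitive behavior of $N_m$ into a true cancellation. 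Verifying that every such sign is accounted for, so that (i)--(iv) hold with exactly the signs needed to close the two computations, is the delicate part; I would carry out the bookkeeping on the normalized complex $\tilde{\calO}$, where by Lemma~\ref{lem:quasi-iso} the codegeneracies vanish and the homotopy (iii) takes its cleanest form. Once the signed versions of (i)--(iv) are confirmed, the two identities follow by the same manipulation as in the non-graded case.
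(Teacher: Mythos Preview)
The paper offers no proof here; it simply records the lemma with a citation to Loday, so your plan to run Loday's cocyclic-module argument is exactly what that citation stands for, and your list (i)--(iv) is the right skeleton.

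Your one conceptual misstep concerns the role of the internal grading. Every structure map $d^i$, $s^i$, $\tau_k$ is grading-preserving by hypothesis, so the sole effect of $\tilde{\cdot}$ on $B_k$ is the overall prefactor $(-1)^{\tilde x}$. In $B_kB_{k+1}$ the two such prefactors cancel, and in $B_{k+1}\partial_k+\partial_{k-1}B_k$ the single prefactor is common to both terms; hence both identities reduce \emph{verbatim} to the ungraded cocyclic statements. The Koszul sign $(-1)^{\tilde x\tilde y}$ from the cyclic-operad axiom for $\tau_{l+m}(x\circ_1 y)$ never enters, since no $\circ_i$-composition occurs in this lemma. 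The genuinely graded bookkeeping you anticipate is what drives Theorem~\ref{thm:BV_Hochschild}, not Lemma~\ref{lem:indeed_boundary}.

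Once the grading is out of the picture, your parity obstruction in (iv) dissolves as well: writing $\lambda_m:=(-1)^m\tau_m$, the signed norm is $N_m=\sum_{j=0}^m\lambda_m^{\,j}$, so $(1-\lambda_m)N_m=1-\lambda_m^{m+1}=0$ identically, and this is the cancellation Loday's argument actually uses. No ``compensating signs'' imported from the operad structure are required.
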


Note that $\tau_k$ does not descend to a map on $\tilde{\calO}(k)$.
But the following holds.

\begin{lem}[{\cite[\S2]{Loday92}}]\label{lem:B_restrict}
$B_k$ restricts to a map $B_k:\tilde{\calO}(k)\to\tilde{\calO}(k-1)$ of the form
\begin{equation}\label{eq:Connes2}
 B_k(x)=(-1)^{\tilde{x}}\sum_{1\le i\le k}(-1)^{i(k-1)}\tau_{k-1}^{-i}\sigma_k(x),
\end{equation}
where $\sigma_k:=s^{k-1}\tau_k$.
\end{lem}

The formula \eqref{eq:Connes} is equal to \eqref{eq:Connes2} on $\tilde{\calO}(k)$ because $s^{k-1}\tau_k^2=\tau_{k-1}s^0$ as a consequence of Lemma~\ref{lem:cocyclic}.

We have the induced map $B_k$ on Hochschild homology by Lemma~\ref{lem:indeed_boundary}.
The main result of this section is the following.

\begin{thm}\label{thm:BV_Hochschild}
$(HH(\calO ),\bullet ,[\cdot ,\cdot ], B)$ is a BV-algebra with respect to the grading $\abs{\cdot}$.
\end{thm}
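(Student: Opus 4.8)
The plan is to verify the two axioms of Definition~\ref{def:BV_algebra} for the candidate operator $\Delta:=B$. First I would record that $(HH(\calO),\bullet,[\cdot,\cdot])$ is already a $1$-Poisson algebra with respect to $\abs{\cdot}$: the theorem of \cite{Tourtchine05} above supplies the Poisson structure, and a degree count shows the bracket has degree one, namely $\abs{[x,y]}=\abs{x}+\abs{y}+1$ (since $x\circ_i y\in\calO(l+m-1)$ is grading-preserving), while $\abs{B(x)}=\abs{x}+1$ (since $\tau_k,s^i$ preserve $\tilde{\cdot}$ but drop the arity by one), so $B$ has degree one as required. The relation $B\circ B=0$ on $HH(\calO)$ is immediate from $B_kB_{k+1}=0$ in Lemma~\ref{lem:indeed_boundary}, and the same lemma ($B_{k+1}\partial_k=-\partial_{k-1}B_k$) is what makes $B$ descend to homology in the first place. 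Everything therefore reduces to the BV relation
\[
 B(x\bullet y)=B(x)\bullet y+(-1)^{\abs{x}}x\bullet B(y)+(-1)^{\abs{x}}[x,y]
\]
in $HH(\calO)$.

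To prove this I would work on the normalized complex $\tilde{\calO}$, which is legitimate by Lemma~\ref{lem:quasi-iso} and lets me use the short form $B_k(x)=(-1)^{\tilde{x}}\sum_i(-1)^{i(k-1)}\tau_{k-1}^{-i}\sigma_k(x)$ with $\sigma_k=s^{k-1}\tau_k$ of Lemma~\ref{lem:B_restrict}. Following \cite{Menichi04}, the idea is to realize the defect of the BV relation as a boundary: I would construct an explicit bilinear chain operation $E:\calO(l)\otimes\calO(m)\to\calO(l+m-2)$, assembled from the cyclic operators $\tau$, the degeneracies $\sigma_k$, and the insertions $\circ_i$, and then verify the chain-level identity
\[
 B(x\bullet y)-B(x)\bullet y-(-1)^{\abs{x}}x\bullet B(y)-(-1)^{\abs{x}}[x,y]
 =\pm\bigl(\partial E(x,y)-E(\partial x,y)-(-1)^{\abs{x}}E(x,\partial y)\bigr).
\]
Passing to homology then kills the right-hand side and yields the desired identity.

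The heart of the verification is the expansion of $B(x\bullet y)=(-1)^{l\tilde{y}}B(\mu(x,y))$ on $\tilde{\calO}$. Writing out Connes' operator on $\mu(x,y)\in\calO(l+m)$ and letting the rotations $\tau^{-i}$ act, I would sort the resulting sum according to where the degeneracy $\sigma_{l+m}$ falls relative to the splitting of the $l+m$ inputs into the $x$-block and the $y$-block. The terms in which the degeneracy acts entirely within one block reassemble, after repeated use of the cocyclic identities of Lemma~\ref{lem:cocyclic}, into $B(x)\bullet y$ and $(-1)^{\abs{x}}x\bullet B(y)$. The terms in which a rotation transports the $x$-block past the $y$-block are precisely those to which the $i=1$ form $\tau_{l+m}(x\circ_1 y)=(-1)^{\tilde{x}\tilde{y}}\tau_m(y)\circ_m\tau_l(x)$ of the cyclic composition axiom applies; these collapse into $x\bar{\circ}y$ and a signed copy of $y\bar{\circ}x$, that is, into $(-1)^{\abs{x}}[x,y]$. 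Whatever is left is organized as $\partial E(x,y)$ together with the correction terms coming from $\partial x$ and $\partial y$.

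The main obstacle is sign bookkeeping. Since the underlying combinatorics is identical to the ungraded situation treated in \cite{Menichi04}, the genuinely new work is to track the Koszul signs introduced by the gradings: the sign $(-1)^{\tilde{x}\tilde{y}}$ from the $i=1$ case of the cyclic composition axiom, the sign $(-1)^{l\tilde{y}}$ built into $\bullet$, and the signs occurring in $\bar{\circ}$ and in $B$. One must check that, under the conversion $\abs{x}=\tilde{x}-l$, these combine so that the bracket term acquires exactly the coefficient $(-1)^{\abs{x}}$ and that its internal antisymmetry sign matches the $(-1)^{(\abs{x}+1)(\abs{y}+1)}$ appearing in $[x,y]$. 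Once $E$ is written with its correct graded signs, the remaining verification is a long but mechanical term-by-term comparison, requiring no idea beyond careful sign discipline.
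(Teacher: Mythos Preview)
Your approach is essentially the paper's: both reduce to the BV relation on the normalized complex and, following \cite{Menichi04}, exhibit the defect as a boundary via an explicit chain homotopy, the only new work being the Koszul sign bookkeeping. The paper organizes this through an intermediate splitting $B_{l+m}(x\bullet y)=Z(x,y)+(-1)^{\abs{x}\abs{y}}Z(y,x)$ (by the range of the rotation index) together with a homotopy $H(x,y)$ showing $Z(x,y)\sim B(x)\bullet y+(-1)^{\abs{x}}x\bar{\circ}y$, and then invokes the commutativity-up-to-homotopy formula \eqref{eq:formula3} to convert the resulting $B(y)\bullet x$ into $x\bullet B(y)$; your heuristic that the terms sort \emph{directly} into $B(x)\bullet y$, $x\bullet B(y)$, and $[x,y]$ glosses over this last step, so your single homotopy $E$ will in fact have to absorb the commutativity homotopy as well as the paper's $H(x,y)$ and $H(y,x)$.
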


This theorem has been already proved for cyclic multiplicative operads of non-graded modules \cite{Tradler02}, \cite[\S6]{Menichi04}.
The proof below is exactly same as that in \cite[\S6]{Menichi04} when the degrees $a$ and $b$ are both even.

\begin{proof}
Let $x\in\tilde{\calO}(l)_a$, $y\in\tilde{\calO}(m)_b$.
Define $Z(x,y)\in\tilde{\calO}(l+m-1)_{a+b}$ by
\[
 Z(x,y):=(-1)^{\abs{x}\abs{y}+a+b}\sum_{1\le j\le l}(-1)^{j(l+m-1)}\tau_{l+m-1}^{-j}\sigma_{l+m}(y\bullet x)
\]
and define $H(x,y)\in\tilde{\calO}(l+m-2)_{a+b}$ by $H(x,y):=\sum_{1\le j\le p\le l-1}H_{j,p}(x,y)$, where
\[
 H_{j,p}(x,y):=(-1)^{j(l-1)+(m-1)(p+1+l)+lb}\tau_{l+m-2}^{-j}\sigma_{l+m-1}(x\circ_{p-j+1}y).
\]
It is not difficult to see that the result follows from the three formulas
\begin{equation}\label{eq:formula1}
 B_{l+m}(x\bullet y)=Z(x,y)+(-1)^{\abs{x}\abs{y}}Z(y,x),
\end{equation}
\begin{multline}\label{eq:formula2}
 (-1)^{\abs{x}}\bigl( Z(x,y)-B_l(x)\bullet y\bigr) -x\bar{\circ}y\\
 =(-1)^b\partial H(x,y)+H(\partial x,y)+(-1)^{l+b+1}H(x,\partial y),
\end{multline}
\begin{equation}\label{eq:formula3}
 z\bullet w-(-1)^{\abs{z}\abs{w}}w\bullet z=(-1)^{\abs{z}}\bigl( \partial (z\bar{\circ}w)-(\partial z)\bar{\circ}w-(-1)^{\abs{z}-1}z\bar{\circ}(\partial w)\bigr) .
\end{equation}
Indeed, \eqref{eq:formula1}, \eqref{eq:formula2} and \eqref{eq:formula3} imply that
\begin{align*}
 &B_{l+m}(x\bullet y)-\bigl( B_l(x)\bullet y+(-1)^{\abs{x}}x\bullet B_m(y)+(-1)^{\abs{x}}[x,y]\bigr) \\
 &\quad =(-1)^{\abs{x}+b}\bigl( \partial H(x,y)+(-1)^b H(\partial x,y)+(-1)^{l+1}H(x,\partial y)\bigr) \\
 &\quad\quad +(-1)^{\abs{x}\abs{y}+\abs{y}+a}\bigl( \partial H(y,x)+(-1)^aH(\partial y,x)+(-1)^{m+1}H(y,\partial x)\bigr) \\
 &\quad\quad -(-1)^{(\abs{x}+1)\abs{y}}\bigl( \partial (B_m(y)\bar{\circ}x)-(\partial B_m(y))\bar{\circ}x-(-1)^{\abs{y}}B_m(y)\bar{\circ}(\partial x)\bigr) .
\end{align*}
The formula \eqref{eq:formula1} follows directly from the definition, and \eqref{eq:formula3} is \cite[(3.7)]{Tourtchine05}.
\eqref{eq:formula2} follows from the following formulas, which are proved similarly as in \cite[\S 6]{Menichi04}:
\begin{itemize}
\item $H(d^0x+(-1)^{l+1}d^{l+1}x,y)=(-1)^{\abs{x}}Z(x,y)-x\bar{\circ}y$,
\item $\displaystyle\sum_{1\le j<p\le l}H_{j,p}((-1)^{p-j}d^{p-j}x,y)=(-1)^{l+b}H(x,d^0y)$,
\item $\displaystyle\sum_{1\le j\le p\le l-1}H_{j,p}((-1)^{p-j+1}d^{p-j+1}x,y)=(-1)^{l+b}H(x,(-1)^{m+1}d^{m+1}y),$
\item $\displaystyle\sum_{1\le j\le l}H_{j,l}((-1)^{l-j+1}d^{l-j+1}x,y)=(-1)^{\abs{x}+1}B(x)\bullet y$,
\item $\displaystyle\sum_{1\le j\le p\le l}\sum_{\genfrac{}{}{0pt}{1}{1\le i\le l}{i\ne p-j,\, p-j+1}}H_{j,p}((-1)^id^ix,y)$

$\displaystyle =(-1)^{b+1}\Bigl(\sum_{1\le j\le p\le l-1}\sum_{\genfrac{}{}{0pt}{1}{1\le i\le p-1,\text{ or}}{p+m\le i\le l+m-2}}(-1)^id^iH(x,y)\Bigr. \Bigl.+d^0H(x,y)$

\qquad $+(-1)^{l+m-1}d^{l+m-1}H(x,y)\Bigr)$,
\item $\displaystyle\sum_{\genfrac{}{}{0pt}{1}{1\le j\le p\le l-1}{p\le i\le p+m-1}}(-1)^id^iH_{j,p}(x,y)=(-1)^l\sum_{1\le i\le m}(-1)^iH(x,d^iy)$.\qedhere
\end{itemize}
\end{proof}

\begin{cor}
$B_*$ defines a BV-algebra structure on $E^2$-term of the Bousfield homology spectral sequence (which converges to $H_*(\EC{n-1}{1})$ when $n\ge 4$) and descends to a BV-operation on $E^{\infty}$-term.
\end{cor}

\begin{proof}
A cyclic structure on the operad $\calC\calB_n$ of ``conformal $n$-balls'' was described in \cite{Budney08}.
An easy observation shows that $\tau_*\mu =\mu$ for the operad $H_*(\calC\calB_n)\cong H_*(f\calC_n)$, where the multiplication $\mu\in H_0(f\calC_n(2))\cong\Z$ corresponds to $1\in\Z$.
Thus $H_*(f\calC_n)$ is a cyclic multiplicative operad of graded modules, and hence $E^2_{**}=HH_*(H_*(f\calC_n))$ admits a BV-algebra structure.

The Bousfield spectral sequence \cite{Bousfield87} is derived from the double complex $\{ C_*(fK_n(*)), d, \partial_*\}$, where $fK_n$ is the framed Kontsevich operad \cite{Salvatore06} (which is cyclic and multiplicative), $C_*$ is the singular chain complex functor and $d$ is the boundary operator for singular chains.
This spectral sequence is a spectral sequence of Poisson algebras \cite{Salvatore06, K08}.
The map $B_*$ is defined on $C_*(fK_n(*))$ by \eqref{eq:Connes} and commutes with both $\partial$ and $d$ since $\tau_{k-1}$ and $s^{k-1}$ are induced by continuous maps defined on $f\calC(*)$; $\tau$ is induced by the cyclic permutation of balls, and $s$ is the forgetting map.
Thus $B_*$ commutes with all the differentials $d^r$ on $E^r$, $r\ge 2$.
\end{proof}

\begin{conj}
At least over rationals, $B$ descends to a map on $H_*(\EC{n-1}{1})$ and coincides with $\Delta$ discussed in \S\ref{s:BV}.
\end{conj}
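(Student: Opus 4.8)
I read the conjecture as comparing the two sides through their brackets. Since $B$ is a (unary) BV-operator while $\lambda$ is a bracket, the only sensible reading is that the Gerstenhaber bracket \emph{determined by} $B$ --- through the BV-identity with $\bullet$, which by Theorem~\ref{thm:BV_Hochschild} is precisely $[\cdot,\cdot]$ --- agrees, under the identification supplied by the Bousfield spectral sequence, with the geometric Poisson bracket $\lambda$ of \S\ref{sss:Poisson}. Thus ``$B$ coincides with $\lambda$'' unpacks as the statement that $[\cdot,\cdot]$ is carried to $\lambda$. The common geometric source of the two sides is Salvatore's action of the framed little disks operad on the homotopy totalization computing $\EC{n-1}{1}$ \cite{Salvatore06,Salvatore09}: the spectral sequence identifies the $E^2$-term $HH_*(H_*(f\calC))$ with the associated graded $GrH_*(\EC{n-1}{1})$, and over the rationals one expects $f\calC$ to be formal, so that the spectral sequence degenerates at $E^2$ and $GrH_*=H_*$ with no extension ambiguity. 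This rational degeneration is what the phrase ``at least over rationals'' should secure.

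The plan is as follows. First I would fix Salvatore's weak equivalence and record the induced rational isomorphism $HH_*(H_*(f\calC))\cong H_*(\EC{n-1}{1})$ together with its multiplicativity, so that the product $\bullet$ is carried to the connected-sum product $\cdot$. Second, following Tourtchine's comparison \cite{Tourtchine05} (shown in Salvatore's draft to agree with Budney's construction \cite{Budney03}), I would establish the operadic dictionary under which the operation $\bar\circ$ of \S\ref{ss:Hochschild} corresponds to the star-operation $*$ of \S\ref{sss:Poisson}; since $[\cdot,\cdot]$ is assembled from $\bar\circ$ exactly as $\lambda$ is assembled from $*$ (Lemma~\ref{lem:bracket_chain}), this yields the identification $[\cdot,\cdot]\leftrightarrow\lambda$ on $E^2=GrH_*$. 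Third, by Theorem~\ref{thm:BV_Hochschild} the bracket $[\cdot,\cdot]$ is exactly the one determined by $B$ through the BV-identity with $\bullet$, so the previous step says that the bracket induced by $B$ is carried to $\lambda$ on the associated graded. Fourth, I would lift this identity from $GrH_*$ to $H_*$ itself using the rational degeneration of the spectral sequence, so that the equality holds on genuine homology.

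The main obstacle is the second step: making Salvatore's homotopy-theoretic action geometrically explicit enough to match the operadic operation $\bar\circ$ with the star-operation $*$ at the chain level. Salvatore's action on the homotopy totalization is built through his solution to the topological cyclic Deligne conjecture \cite{Salvatore09}, so the operad composition $\circ_i$ underlying $\bar\circ$ is produced abstractly; comparing it with the concrete ``push-off through $f$'' that defines $*$ requires unwinding the weak equivalence of Proposition~\ref{prop:all_he} and of \cite{Salvatore06} down to the level of cube-chains. A secondary difficulty is the fourth step: one must justify the rational collapse of Bousfield's spectral sequence (expected from formality of $f\calC$ over $\mathbb{Q}$) so that the identification of brackets on $E^2$ lifts, without extension correction, to $\lambda$ on $H_*(\EC{n-1}{1})$; in positive characteristic surviving differentials would obstruct precisely this lift, which is why the statement is restricted to the rationals.
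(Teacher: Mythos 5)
You were asked to prove a statement that the paper itself leaves as an open \emph{conjecture}: there is no proof in the paper to match, so a successful attempt would have to supply complete arguments on its own, and yours does not. What you have written is a research plan whose two load-bearing steps are exactly the open problems, and you yourself flag them as unresolved ``obstacles'' rather than resolving them: the chain-level dictionary matching $\bar{\circ}$ with the star-operation $*$ of \S\ref{sss:Poisson} through Salvatore's weak equivalence, and the rational collapse of Bousfield's spectral sequence together with an extension-free lift from $E^2=GrH_*(\EC{n-1}{1})$ to $H_*(\EC{n-1}{1})$. Neither is proved, and both are delicate: formality of $f\calC$ over $\mathbb{Q}$, even granted, does not by itself force degeneration of the \emph{homology} spectral sequence of the cosimplicial space (whose convergence already requires the higher-codimension hypothesis the paper alludes to), and even with degeneration, an identity between operations on the associated graded does not automatically transport to honest homology --- one must check the operations are filtration-compatible and control the extensions, which you only assert.

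Separately, your reading of the conjecture is doubtful, and this matters because it changes what would need to be proved. The introduction to \S\ref{s:BV_Hochschild} already records that the Poisson bracket on $HH_*(\calO )$ from \cite{Tourtchine05} is shown in Salvatore's draft to coincide with the bracket of \cite{Budney03}, i.e.\ with $\lambda$; a conjecture asserting only the bracket comparison would thus be essentially settled already. The genuinely open content concerns the unary operator: Connes' $B$ raises the degree $\abs{\cdot}$ by one, exactly like the geometric BV-operation $\Delta$ of \S\ref{ss:Hatcher} induced by Hatcher's $S^1$-action, and the intended comparison is $B\leftrightarrow\Delta$ (with ``$\lambda$'' best read as shorthand for the geometric BV-structure, or a slip for $\Delta$). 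Your reduction to brackets is strictly weaker than this: the BV identity of Definition~\ref{def:BV_algebra} determines the bracket from $\Delta$ and the product, not conversely, and two distinct square-zero degree-one operators can induce the same bracket (they may differ by a derivation of the product). So even if your four steps were carried out in full, they would at best recover the already-known identification $[\cdot ,\cdot ]\leftrightarrow\lambda$ and would not establish that $B$ corresponds to $\Delta$, which is the point of the conjecture.
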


\section*{Acknowledgment}
The author is deeply grateful to Ryan Budney and Paolo Salvatore for informing him about their framed little disks action, to Victor Turchin for answering questions about his Poisson structure on Hochschild homology, and to Takahito Naito for invaluable discussions.
The author is partially supported by Grant-in-Aid 228006, 23840015 and 25800038, JSPS.

\providecommand{\bysame}{\leavevmode\hbox to3em{\hrulefill}\thinspace}
\providecommand{\MR}{\relax\ifhmode\unskip\space\fi MR }
\providecommand{\MRhref}[2]{%
  \href{http://www.ams.org/mathscinet-getitem?mr=#1}{#2}
}
\providecommand{\href}[2]{#2}

\end{document}